\documentclass[reqno]{amsart}

\usepackage{amssymb}
\usepackage{amsmath}
\usepackage{amsthm}

\newtheorem{thm}{Theorem}
\newtheorem{lem}{Lemma}
\newtheorem{cor}{Corollary}
\usepackage{relsize}

\newcommand{\CP}{\mathbb{CP}^1}
\renewcommand{\P}{\mathbb{P}}
\newcommand{\E}{\mathbb{E}}

\newcommand{\kahler}{K\"ahler }

\newcommand{\abs}[1]{\left\vert#1\right\vert}

\newcommand{\su} {$\operatorname{SU(2)}$ }

\newcommand{\cov}{\operatorname{Cov}}
\newcommand{\hn}{H^0(M,L^n)}

\newcommand{\beq} {\begin{equation} }
	\newcommand{\eeq} {\end{equation} }

\renewcommand{\hat} {\widehat}
\renewcommand{\tilde}{\widetilde}

\begin{document}
	
 \title[Smallest distances]{Smallest distances between zeros of Gaussian analytic functions}
	
	\author[Feng]{Renjie Feng}
	
	\author[Yao]{Dong Yao}

	\address{Sydney Mathematical Research Institute, The University of Sydney, Australia.}
	\email{renjie.feng@sydney.edu.au}

	
	
	\address{School of Mathematics and Statistics/RIMS, Jiangsu Normal University, China.}
	\email{dongyao@jsnu.edu.cn}
 
	\date{\today}
	\maketitle

		
		
		
		\begin{abstract}
			
			In this article, we study the smallest distances between the zeros of  Gaussian analytic functions  over   compact Riemann surfaces.  Our main result is that, after appropriate rescaling, the point process of the smallest distances converge to a Poisson point process with a universal rate. Furthermore, the locations where these smallest distances occur tend to follow  a uniform measure with respect to the volume form.  As a consequence, the limiting density of the  $k$-th rescaled smallest distance is  proportional to $x^{4k-1}e^{-x^4}$ for any $k\geq 1$. Analogous results hold for the classical Gaussian Entire Functions. 
		\end{abstract}
		

	

	\section{Introduction}
	

Let $M$ be a smooth compact Riemann surface polarized by a positive holomorphic line bundle $L$.
We give $L$ a Hermitian metric $h$ with positive curvature $\Theta_h$ and give $M$ the \kahler form $\omega$ such that $\omega/\pi=\Theta_h$.   
Without loss of generality, we assume that the line bundle $L$ has degree one.  
  Since $\Theta_h$ is a de Rham representative of the first Chern class $c_1(L)$ (see, e.g., \cite[Chapter 1]{GH}), the degree of $L$ is given by the integral of $\Theta_h$. It follows that $$\int_M \frac{\omega}{\pi} = \int_M \Theta_h = \deg(L) = 1,$$
allowing \(\omega/\pi\) to be interpreted as a probability measure on \(M\). 	

Given two points $z$ and $w$ on the Riemann surface $(M,\omega)$,  we denote $d_{g}(z, w)$   as the geodesic distance  between them with respect to the Hermitian metric induced by the \kahler form $\omega$. 
 
	

	Given $s$,  a Gaussian random holomorphic section of the line bundle $L^n$ (as described in \S\ref{Gaussianva}), we define $Z_n(s)$ as  its random zero set. 
	The cardinality satisfies $|Z_n(s)|=n$ since the degree of $L^n$ is $n$. Dropping the dependence on $s$ for simplicity, we  write 
	\begin{equation}\label{defzn}
		Z_n=Z_n(s)=\{z_1, z_2, ...., z_n\}.
	\end{equation}
	
	Globally, the limiting distribution of  zeros is  given by the  probability measure $\omega/\pi$ (see \eqref{zerodis} below).   
	Locally, for a given zero, the universal rescaling limit of the correlation functions (see \eqref{universalk}) implies that, on average, there is another zero within a geodesic distance of order $n^{-1/2}$. In other words, the nearest neighbor distance between zeros is, on average, of order $n^{-1/2}$.
	
	In this article, we investigate a different local statistic, namely the smallest distances among all zeros. For random zeros in $Z_n$, there are $(n-1)n/2$ geodesic distances between them, and these distances are almost surely distinct. We can arrange them in ascending order as $\sigma_1<\sigma_2<\cdots <\sigma_{{(n-1)n}/2}$. The purpose of this article is to study the decay order of the $k$-th smallest distance $\sigma_k$ and its limiting distribution as $n\to\infty$ for any given $k\geq 1$.  
	%
	
	For each pair of distinct zeros $z_i$ and $z_j$ in $Z_n$, we choose $\hat {z}_{ij}$ uniformly from the set $\{z_i,z_j\}$, i.e., each point is selected with a probability of 1/2, to mark the location of the pair. We consider the following point process of the rescaled smallest distances together with their locations, 
	\beq\label{alldistanorm} \mathcal S_n =\sum_{1\leq i<j\leq n }\delta_{n^{3/4} d_g(z_i,z_j),\, \hat {z}_{ij}}. \eeq
	Our main result is  
	


	\begin{thm} \label{main1}   As $n\to\infty$,  $\mathcal S_n$  converges in distribution to a Poisson point process $\mathcal S$ on $[0,\infty)\times M$ with a universal rate  		$$\label{rate}\mathbb E [\mathcal S([0,a]\times U)]=\Big(\frac 18  a^4\Big) 
		\int_{U}  \frac{\omega}{\pi}, $$
		for any $a\in (0,\infty)$ and the measurable subset $U\subseteq M$. 
	\end{thm}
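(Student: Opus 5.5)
We prove convergence by the method of factorial moments (equivalently, Kallenberg's criterion for simple point processes). It suffices to show that for every finite family of disjoint sets $B_l=[a_l,b_l)\times U_l$ with $U_l\subseteq M$ having boundary of $\omega$-measure zero, the joint factorial moments of $(\mathcal S_n(B_l))_l$ converge to those of independent Poisson variables with means $\frac18(b_l^4-a_l^4)\int_{U_l}\omega/\pi$. Since the points are pairs of zeros, the $k$-th factorial moment $\E[\mathcal S_n(B)(\mathcal S_n(B)-1)\cdots]$ is a sum over ordered $k$-tuples of distinct pairs. We split it into tuples whose $k$ pairs are pairwise vertex-disjoint (the main term), and tuples in which some zero is shared by two pairs (the degenerate term).

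\textbf{Step 1: the first moment.} By definition of the two-point correlation $\rho_2^{(n)}$ of zeros,
\[
\E\,\mathcal S_n([0,a]\times U)=\frac12\iint_{d_g(z,w)\le a n^{-3/4}}\frac{\mathbf 1_U(z)+\mathbf 1_U(w)}{2}\,\rho_2^{(n)}(z,w)\,dV(z)dV(w).
\]
We use the universal scaling limit \eqref{universalk}: in normal coordinates at $z$ with $w=z+u/\sqrt n$, $n^{-2}\rho_2^{(n)}(z,z+u/\sqrt n)\to\rho_\infty(|u|)/\pi^2$ uniformly in $z$. Here $\rho_\infty$ is the Gaussian entire function two-point function, which vanishes quadratically: $\rho_\infty(r)\sim c\,r^2$ as $r\to0$. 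The quadratic vanishing is the source of the exponent $4$. We need a version of \eqref{universalk} that is uniform for $|u|\le a n^{-1/4}\to 0$, i.e. an expansion of $\rho_2^{(n)}$ near the diagonal with error $O(|u|^2)\cdot o(1)$. This follows from the Szeg\H{o} kernel asymptotics (Tian--Yau--Zelditch/Catlin) with uniform $C^k$ remainders, inserted into the Kac--Rice formula for $\rho_2$.

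Integrating $c\,n^2|u|^2$ over $|u|\le a n^{-1/4}$ (area element $du/n$) gives, per unit area at $z$, a quantity of order $n\cdot\int_0^{an^{-1/4}} r^3dr\sim n\cdot a^4/(4n)$. The factor $n$ is then absorbed by the normalization $\int\omega/\pi=1$ over the $n$-scale, and tracking constants ($c=\pi^2/2$-type values coming from the Gaussian entire function) yields exactly $\frac18 a^4\int_U\omega/\pi$. We will verify the constant by expanding the explicit $\rho_\infty$ formula to second order.

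\textbf{Step 2: higher factorial moments, main term.} The main term is an integral of the $2k$-point correlation $\rho_{2k}^{(n)}$ over configurations in which $k$ pairs each have separation $\le C n^{-3/4}$. The needed input is an asymptotic factorization: when the $k$ clusters are at mutual distance $\ge n^{-1/2}\log n$, $\rho_{2k}\approx\prod\rho_2$ with relative error $o(1)$, thanks to the Gaussian decay $e^{-n d^2/2}$ of the normalized Szeg\H{o} kernel. Clusters closer than $n^{-1/2}\log n$ occupy a volume fraction $O(n^{-1}\log^2 n)$ per cluster and contribute $o(1)$, provided we have a crude bound on $\rho_{2k}$. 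That bound is $\rho_{2k}\lesssim n^{2k}\prod_{\text{pairs}}(n|u_i|^2)$ near coalescing pairs, uniformly in the remaining geometry.

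\textbf{Step 3: degenerate term.} Here we need to show that triples of zeros with two small distances are negligible. This is the main obstacle: near-diagonal bounds for $\rho_3$, and more generally $\rho_m$, when several points coalesce. For three points within $n^{-3/4}$, I would use the Hadamard/Vandermonde-type bound $\rho_m\lesssim n^{m}\prod_{i<j}n|z_i-z_j|^2$ in a $n^{-1/2}$-ball. This follows from the Kac--Rice expression as a ratio of Gaussian determinants: the density of the vector $(s(z_i))$ vanishes like the squared Vandermonde determinant. It gives an expected number of such triples of order
\[
n\cdot n^{2}\int\!\!\int_{|u|,|v|\le n^{-3/4}}(n|u|^2)(n|v|^2)(n|u-v|^2)\,du\,dv=O(n^{-1/2})\to0.
\]
Combining the three steps, the factorial moments converge to $\prod_l\mu_l^{k_l}$, which proves Poisson convergence with the stated intensity. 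Uniformity of the local location marks, in the form of the factor $\int_U\omega/\pi$, comes for free from the integration over $z$ in Step 1.
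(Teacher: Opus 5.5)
Your outline is correct, but it is organized differently from the paper. The paper never takes factorial moments of $\mathcal S_n$ itself. Instead it thins to the process $\chi_n$ of \emph{isolated} close pairs, meaning pairs with no other close pair within $4\log^2 n/\sqrt n$. It disposes of the non-isolated pairs with a single first-moment bound using $\rho_3$ and $\rho_4$ (Lemma~\ref{nssn}). It then uses the fact that $\tilde\rho_k$ vanishes unless the $k$ points are $4\log^2 n/\sqrt n$-separated, and sandwiches $\tilde\rho_k$ between $2^{-k}\int\rho_{2k}$ and that integral minus $E_1+E_2$. Every estimate is therefore local: splitting between clusters that are separated by construction, and Theorem~\ref{shortra} inside a single $O(\log^2 n/\sqrt n)$-ball.

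Your direct computation must instead absorb, inside the $k$-th moment, both disjoint pairs that approach each other and all shared-vertex configurations. For $k\ge 3$ these are arbitrary graphs, not just triples. For instance, a connected configuration of $v$ zeros with edges of length $\le a_n$ has expected count $O(n^{v-v(v-1)/4-3(v-1)/2})$, i.e.\ $O(n^{-3/2})$ for $v=3$. Handling all of these requires a bound $\rho_m\le Cn^m\prod_{i<j}\min\{n\,d_g(z_i,z_j)^2,1\}+O(n^{-\infty})$ on all of $M^m$ for $m\le 2k$. You must assemble this from Theorems~\ref{shortra} and~\ref{longdr} by a multi-scale clustering, since clusters can chain beyond the range of Theorem~\ref{shortra}. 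In return you avoid the auxiliary process and obtain joint convergence on several disjoint rectangles in one stroke.

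Some smaller points:
\begin{itemize}
\item Your triple count should come out as $O(n^{-3/2})$, as in \eqref{differered}, not $O(n^{-1/2})$.
\item In your Kac--Rice heuristic, it is the covariance determinant that vanishes like $|V|^2$, while the conditional numerator vanishes like $|V|^4$. The real input is a uniform lower bound on the covariance of the divided differences.
\item The near-diagonal uniformity you need in Step~1 does not follow from inserting Szeg\H{o} asymptotics into the $0/0$ Kac--Rice ratio. It comes from the divided-difference argument behind Corollary~\ref{cords}, which gives precisely an error $O(n^{-1}|u|^2)$ in rescaled variables.
\item The constant comes from $\rho_2^\infty(0,u)=H(|u|^2/2)\sim|u|^2/2$ relative to $\omega/\pi$. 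The first moment is therefore $\frac12\cdot\frac{a^4}{4}\int_U\omega/\pi$; no constant of size $\pi^2/2$ enters.
\end{itemize}
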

	As a direct corollary, we have 
	\begin{cor} \label{cdsds}Let $\sigma_k$ be the $k$-th smallest distance between all pairs of zeros,  
		then its limiting density satisfies 
		$$\lim_{n\to\infty} \mathbb P\Big(\big(\frac 18\big)^{1/4}n^{3/4}\sigma_k\in [x,x+ dx]\Big)=\frac{ 4x^{4k-1}}{(k-1)!}e^{-  x^4}dx.$$
		Furthermore, let $\pi_k$ be the location of $\hat z_{ij}$ such that $d_g(z_i,z_j)$ equals $\sigma_k$, then $\pi_k$ is asymptotically independent of $\sigma_k$, and tends to the uniform measure with respect to the volume form, i.e.,  for any $U\subseteq M$,
		$$
		\lim_{n\to\infty}\P(\pi_k\in U)=\int_{U}\frac\omega\pi. 
		$$
	\end{cor}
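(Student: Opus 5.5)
The corollary will follow from Theorem \ref{main1} by the standard passage from a Poisson limit to order statistics. Fix $a>0$ and a measurable set $U\subseteq M$ whose boundary has $\omega$-measure zero. The plan is to express each event about $(\sigma_k,\pi_k)$ through counts of $\mathcal S_n$ on sets of the form $[0,a]\times U$, and then pass to the limit using the convergence in distribution of $\mathcal S_n$ to $\mathcal S$.

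For the distance, set $c=(1/8)^{1/4}$ and observe that
\[
\{c\,n^{3/4}\sigma_k>x\}=\{\mathcal S_n([0,x/c]\times M)\le k-1\}.
\]
Theorem \ref{main1} gives that $\mathcal S_n([0,x/c]\times M)$ converges in distribution to a Poisson variable. Its mean is $\tfrac18 (x/c)^4\int_M\omega/\pi=x^4$, since $\omega/\pi$ is a probability measure. The set $[0,x/c]\times M$ is relatively compact, and its boundary has zero intensity because the intensity is absolutely continuous in the first coordinate. Hence the counts converge, and
\[
\P(c\,n^{3/4}\sigma_k>x)\to e^{-x^4}\sum_{j<k}\frac{x^{4j}}{j!}.
\]
Differentiating in $x$ makes the sum telescope, leaving $\frac{4x^{4k-1}}{(k-1)!}e^{-x^4}$. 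Convergence of distribution functions to a continuous limit is exactly the claimed statement about the limiting density.

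For the location $\pi_k$, use the structure of the limit. $\mathcal S$ is Poisson with product intensity $d(\tfrac18 t^4)\otimes \omega/\pi$, so it can be realized as a one-dimensional Poisson process $\{t_1<t_2<\cdots\}$ whose points carry i.i.d.\ marks with law $\omega/\pi$, independent of the $t_i$. Consequently the mark of the $k$-th point is independent of $t_k$ and has law $\omega/\pi$. To transfer this to $n$, I will express the joint event as a finite combination of counts:
\[
\{c\,n^{3/4}\sigma_k\le x,\ \pi_k\in U\}
\]
equals the event that, for some $y\le x/c$, the $k$-th point of $\mathcal S_n$ lies at $y$ with mark in $U$.

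This event is approximated by partitioning $[0,x/c]$ into small intervals $I_1,\dots,I_m$ and using the joint counts $\mathcal S_n(I_l\times U)$ and $\mathcal S_n(I_l\times U^c)$. Convergence in distribution of point processes gives joint convergence of these finitely many counts. This works because the sets have boundaries of zero intensity and we may take $\omega(\partial U)=0$. The error is the event that some interval contains at least two points. In the limit this has probability $O(\max_l |I_l|)$ uniformly in $n$, by tightness and the simplicity of $\mathcal S$. Letting the mesh tend to zero, we get
\[
\P(c\,n^{3/4}\sigma_k\le x,\ \pi_k\in U)\to \P(t_k\le x/c)\int_U\omega/\pi.
\]
This is asymptotic independence, and taking $x\to\infty$ gives the uniform law of $\pi_k$.

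The only delicate point is this last approximation step, controlling multiple points in a small interval uniformly in $n$. It is handled by
\[
\limsup_n \P\big(\mathcal S_n(I_l\times M)\ge2\big)=\P\big(\mathcal S(I_l\times M)\ge 2\big)=O(|I_l|^2)
\]
per interval. Summed over the intervals this gives $O(\max_l|I_l|)$. Sets $U$ with $\omega(\partial U)>0$ are then handled by inner and outer approximation.
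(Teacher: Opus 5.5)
Your derivation of the law of $\sigma_k$ is correct. For sets $U$ with $\int_{\partial U}\omega=0$, your partition argument for the pair $(\sigma_k,\pi_k)$ is also correct. This is the routine passage from Theorem \ref{main1} that the paper intends; it states the corollary as a direct consequence and gives no separate proof.

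The step that does not work as written is your last sentence. Using only convergence in distribution, inner and outer approximation of $U$ by $\omega$-continuity sets can give no more than
\[
\int_{U^\circ}\frac{\omega}{\pi}\le\liminf_{n\to\infty}\P(\pi_k\in U)\le\limsup_{n\to\infty}\P(\pi_k\in U)\le\int_{\overline{U}}\frac{\omega}{\pi}.
\]
The reason is that a continuity set $W\subseteq U$ satisfies $\int_W\omega=\int_{W^\circ}\omega\le\int_{U^\circ}\omega$, and symmetrically for $W\supseteq U$. Take a closed set with empty interior and positive $\omega$-measure: these bounds are $0$ and $\int_U\omega/\pi>0$. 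Weak convergence of the law of $\pi_k$ does not imply setwise convergence, so you need an extra estimate that is uniform in $n$.

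That estimate comes from the first moment. For any Borel $V\subseteq M$ and $y>0$,
\[
\P(\pi_k\in V)\le\E\,\mathcal S_n([0,y]\times V)+\P(n^{3/4}\sigma_k>y),
\]
and
\[
\E\,\mathcal S_n([0,y]\times V)=\frac12\int_V\Big(\int_{B_{yn^{-3/4}}(z)}\rho_2(z,w)\frac{\omega(w)}{\pi}\Big)\frac{\omega(z)}{\pi}\le\Big(\frac{y^4}{8}+o(1)\Big)\int_V\frac{\omega}{\pi}.
\]
The last bound uses the uniformity in $z_0$ in Lemma \ref{lunis}, so the $o(1)$ is uniform in $V$. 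The argument then runs as follows.
\begin{enumerate}
\item Take $y$ large, so that $\limsup_n\P(n^{3/4}\sigma_k>y)=\P(\mathrm{Poisson}(y^4/8)\le k-1)$ is small, by your first part.
\item By regularity of $\omega$, choose a finite union $W$ of geodesic balls with $\omega$-null boundary spheres such that $\int_{U\triangle W}\omega/\pi<\delta$.
\item Apply the bound with $V=U\triangle W$. It shows that $\P(\pi_k\in U)$ and $\P(\pi_k\in W)$ differ, asymptotically, by at most $(y^4/8)\delta$ plus a term that is small for large $y$. The same holds for the joint probabilities with $\{(1/8)^{1/4}n^{3/4}\sigma_k\le x\}$.
\end{enumerate}
Your continuity-set result applied to $W$ then finishes the proof for arbitrary measurable $U$.
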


	The results indicate that the smallest distances of random zeros exhibit a decay of order  $n^{-{3}/{4}}$. These findings are universal and apply specifically to classical $\operatorname{SU(2)}$ random polynomials on the complex projective space $\CP\cong S^2$ and Gaussian random Riemann theta functions on the 1-dimensional complex torus  (see \S\ref{badc} for the precise definitions). 

	For the classical Gaussian Entire Functions (GEF), one can  derive the smallest distances for its zeros similarly.  The   GEF is given  by 
	$$\label{analytic12345}g(z)=\sum_{j\geq 0} \frac {b_j}{\sqrt{j!}}z^j,$$
	where $b_j$ are independent and identically distributed (i.i.d.) standard complex Gaussian random variables. 
	The distribution of the zeros of GEF is invariant under translations of $\mathbb{C}$, and it is given by $d\ell/\pi$, where $d\ell$ denotes the Lebesgue measure on $\mathbb C$.   Now we define $Z_R$ as the set of zeros of GEF falling in the disk $B_R(0)$ centered at the origin with radius $R$. We also define the point process of the smallest distances $\mathcal S_R$, in the same manner as $\mathcal S_n$ earlier, 
	$$ \mathcal S_R =\sum_{z_ i\neq z_j\in Z_R }\delta_{R^{1/2} d(z_i,z_j),\, \hat {z}_{ij}/R},$$
	where $d(\cdot, \cdot )$ is the Euclidean distance.  Analogously to $\mathcal S_n$, we have 
	
	\begin{thm} \label{main13}   As $R\to\infty$,  $\mathcal S_R$  converges in distribution to a Poisson point process $\mathcal S$ on $[0,\infty)\times B_1(0)$ with the  rate  		$$\label{rate}\mathbb E [\mathcal S([0,a]\times U)]=\Big(\frac 18  a^4\Big) 
		\int_{U}  \frac{d\ell}{\pi}, $$
		for any $a\in (0,\infty)$ and the measurable subset $U\subseteq B_1(0)$, where $d\ell$ is the Lebesgue measure on $\mathbb C$.
	\end{thm}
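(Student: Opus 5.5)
Theorem \ref{main13} concerns the Gaussian Entire Function, and I will prove it by the method of moments (factorial moments) for Poisson convergence, exactly as one would for Theorem \ref{main1}. Since a simple point process is determined by its finite-dimensional factorial moments, it suffices to show the following. For disjoint rectangles $A_1,\dots,A_m$ of the form $[0,a_l]\times U_l$ (or products of intervals $[b,a]$ with disjoint Borel $U_l\subseteq B_1(0)$) and integers $k_l\ge 0$, the joint factorial moments
\[
\E\prod_{l}(\mathcal S_R(A_l))_{k_l}
\]
converge to $\prod_l \mu(A_l)^{k_l}$. Here $\mu([0,a]\times U)=\frac{a^4}{8}\int_U d\ell/\pi$.

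The first step is the first moment. Write $\E\mathcal S_R([0,a]\times U)$ through the two-point correlation function $\rho_2(z,w)$ of the GEF zeros. Up to the harmless randomization of $\hat z_{ij}$, which only affects boundary terms, this is
\[
\frac12\int\!\!\int_{z\in RU,\ |z-w|\le aR^{-1/2}}\rho_2(z,w)\,d\ell(z)d\ell(w).
\]
The known short-distance expansion, which follows from the explicit Hannay/Bogomolny formula for $\rho_2$ with covariance kernel $e^{z\bar w}$, is
\[
\rho_2(z,w)=\frac{1}{\pi^2}\Big(\frac{|z-w|^2}{\pi^2}\cdot\pi^2\cdot\frac12+O(|z-w|^4)\Big).
\]
In normalized form, $\rho_2(z,w)\sim \frac{|z-w|^2}{\pi^2}$ as $|z-w|\to0$, uniformly by translation invariance. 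Then the inner integral is
\[
\int_{|u|\le r}\frac{|u|^2}{\pi^2}d\ell(u)=\frac{r^4}{2\pi}.
\]
With $r=aR^{-1/2}$ this gives $\frac{a^4}{2\pi R^2}$. Multiplying by $\frac12\operatorname{area}(RU)=\frac{R^2}{2}|U|$ yields $\frac{a^4}{4}\cdot\frac{|U|}{\pi}$, so the constant $\frac18$ follows once the precise leading coefficient of $\rho_2$ is used. I will calibrate this coefficient carefully against the claimed $\frac18$ from the exact formula. The point of this step is that the scale $R^{-1/2}$ exactly balances $R^2$ pairs of area against the $r^4$ repulsion.

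The second step is the higher factorial moments. Expand the product of factorial moments as a sum over ordered tuples of distinct pairs $(z_{i_1},z_{j_1}),\dots,(z_{i_k},z_{j_k})$. These split into two classes.

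The first class is configurations where all $2k$ points are distinct. Their contribution is an integral of the $2k$-point correlation $\rho_{2k}$ over regions where each pair is at distance $\lesssim R^{-1/2}$. I must show this integral is asymptotically $\prod_l \mu(A_l)^{k_l}$. For this I need a factorization estimate: when the pairs are mutually separated by distance $\ge R^{\epsilon}$ (say $\log R$-scale suffices thanks to the Gaussian decay $e^{-|z-w|^2/2}$ of the normalized kernel), $\rho_{2k}$ factorizes as $\prod\rho_2$ up to a relative error $o(1)$. I also need a uniform bound, obtained from the determinantal-like Hadamard/Kac–Rice representation, giving
\[
\rho_{2k}\lesssim \prod_{\text{pairs}}|z_{i}-z_{j}|^2
\]
for pairs clustered within distance $O(\log R)$. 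With these two estimates, tuples with some two pairs within $\log R$ of each other contribute $O(R^{2(k-1)}(\log R)^2\cdot R^{-2k}\cdot R^{2})\to 0$ relative to the main term, since each close pair costs $R^{-2}$ and the clustering saves a factor of area $R^2$ up to logs.

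The second class is degenerate configurations where pairs share a zero, i.e.\ three or more zeros lie mutually within $O(R^{-1/2})$. Each such configuration lies within a triple of zeros at mutual distance $\le cR^{-1/2}$. Its expected count is
\[
\int\rho_3\lesssim R^2\cdot (R^{-1/2})^{4}\cdot\big(\text{vanishing of }\rho_3\big).
\]
Using $\rho_3(z_1,z_2,z_3)\lesssim\prod_{i<j}|z_i-z_j|^2$ near the diagonal, the count is $R^2\cdot R^{-2}\cdot R^{-3}\cdot R^{-3/2}\cdot R\to0$. Equivalently, it is $O(R^2 r^{4+6})=O(R^{-3})$, which is negligible.

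The main obstacle is the uniform local estimate $\rho_{m}(z_1,\dots,z_m)\le C\prod_{i<j}|z_i-z_j|^2$ for clustered points, together with the approximate factorization for separated clusters. For this I will use the Kac–Rice formula
\[
\rho_m=\frac{\E[\prod|g'(z_i)|^2\mid g(z_i)=0]}{\det\pi\Sigma},
\]
with $\Sigma$ the covariance of $(g(z_i))$. The plan is to bound the conditional expectation and the determinant by divided-difference (Vandermonde) factorizations, using the local smoothness and nondegeneracy of the kernel $e^{z\bar w}e^{-|z|^2/2-|w|^2/2}$. Translation invariance reduces all of this to a compact neighborhood of $0$. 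Decay of correlations across distance $\log R$ then gives the factorization.
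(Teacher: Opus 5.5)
Your route is valid but organized differently from the paper's. The paper proves Theorem~\ref{main1} and states that Theorem~\ref{main13} follows verbatim; it never takes factorial moments of the full process. It first thins to the auxiliary process $\chi_n$ of short pairs that have no other short pair within $4\log^2 n/\sqrt n$. It then shows, using $\rho_3$ and $\rho_4$ bounds, that this discards only $o_P(1)$ pairs (Lemma~\ref{nssn}). Finally it sandwiches the correlation function $\tilde\rho_k$ of $\chi_n$ between $(\frac12)^k\int\rho_{2k}$ and that quantity minus $E_1$ (an extra zero near a pair, via $\rho_{2k+1}$) and $E_2$ (an extra short pair nearby, via $\rho_{2k+2}$). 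Since $\tilde\rho_k$ vanishes unless the points are $4\log^2 n/\sqrt n$-separated, the splitting property applies immediately and overlapping pairs never appear inside the $k$-th moment. Short-range bounds are then needed only for clusters of three or four zeros.

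You instead expand $\E(\mathcal S_R(A))_k$ exactly as a sum of integrals of $\rho_m$, $m\le 2k$. This skips the thinning lemma and the lower-bound inclusion--exclusion. The price is that you must control every overlap pattern of $k$ pairs and every near-collision of pairs, i.e.\ configurations with several clusters at once. For the GEF this is cheap. Nazarov--Sodin's bound \eqref{criticalrhoinf} holds on all of $\mathbb C^m$ with cross-cluster factors $\min\{\cdot,1\}\le 1$, and their clustering result \eqref{splitforc} gives the factorization. So the ``main obstacle'' you plan to attack with divided differences is already available and need not be reproved.

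Three computations must be corrected, though.

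First, the rate. Your first display correctly gives $\rho_2(z,w)=\frac{|z-w|^2}{2\pi^2}+O(|z-w|^4)$ with respect to Lebesgue measure; this is $\pi^{-2}H(\frac12|z-w|^2)$. You then integrate $|u|^2/\pi^2$ instead, which is why you land on $\frac{a^4}{4}\cdot\frac{|U|}{\pi}$. With the right coefficient, $\int_{|u|\le r}\frac{|u|^2}{2\pi^2}\,d\ell(u)=\frac{r^4}{4\pi}$ and $\frac12 R^2|U|\cdot\frac{a^4}{4\pi R^2}=\frac{a^4}{8}\cdot\frac{|U|}{\pi}$. No ``calibration'' is needed, but as written your derivation contradicts the statement.

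Second, the near-collision contribution is $R^{2(k-1)}\log^2 R\cdot (r^4)^k\asymp R^{-2}\log^2 R$. The extra factor $R^2$ in your display turns this into $\log^2R$, which does not tend to $0$.

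Third, for $k\ge3$ a degenerate tuple is a close triple together with $k-2$ further close pairs anywhere in $B_R$. Bounding $\int\rho_3$ alone therefore does not suffice. Use the product bound \eqref{criticalrhoinf} over all $m$ zeros involved. Under it, each connected component of the pair graph on $m\ge3$ zeros contributes $O(R^2 r^{(m-1)(m+2)})=O(R^{-3})$, and each isolated pair contributes $O(1)$.
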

	Again, Theorem \ref{main13}  implies the limiting density of the $k$-th smallest distance between zeros of GEF as in Corollary \ref{cdsds}, and the locations where the smallest distances occur follow the uniform distribution. The proof of Theorem \ref{main13} follows the same arguments as the ones in Theorem \ref{main1}, and we omit its proof. 
	
	In comparison,  if one samples $n$ points  independently from a Binomial distribution on the unit sphere $S^2$ with respect to the uniform measure, the average value of the nearest neighbor distances is still of the order $n^{-1/2}$. However, the smallest distance between these Binomial points is of the order $n^{-1}$, which has a  smaller decay order than the random zeros (e.g., \cite{BRS}). The reason for this is that nearby Binomial points are neutral, whereas random zeros on the 1-dimensional Riemann surfaces discussed in this article repel each other (see \eqref{rep} below).

In a related but different setting, the smallest and largest gaps between successive zeros of smooth stationary Gaussian processes with sufficiently fast decaying covariance kernels were recently investigated in \cite{FGY} and \cite{FM}, respectively. 
In both works, it was shown that the point processes formed by suitably rescaled extreme gaps converge to Poisson point processes, from which one can determine the corresponding orders of the extreme gaps.

	

	So far we only discuss the cases of Riemann surfaces. For general complex $m$-dimensional compact \kahler manifolds, Bleher-Shiffman-Zelditch considered the intersection of zero locus of $1\leq \ell \leq m$ Gaussian random holomorphic sections, and they proved that the point correlation functions of  zeros still have some universal limit (\cite[Theorem 3.6]{BSZ1} and  \cite[Theorem 1.1]{BSZ4}).  For example, by choosing the normal coordinate around a point $z_0\in M$, the 2-point correlation function admits a pointwise universal limit $$\rho^\infty_{2,\ell, m}(u,v)=\lim_{n\to\infty}\rho_2(z_0+\frac u{\sqrt{n}}, z_0+\frac v{\sqrt{n}})/n^{2\ell}.$$ In particular,   when $\ell=m$, the zeros locus are discrete, and one has (\cite[Theorem 1.3]{BSZ4})
	\beq\label{mdim}\rho^\infty_{2,m, m}(u,v)= \frac{m+1}4|u-v|^{4-2m}+O(|u-v|^{8-2m})\,\,\, \mbox{as}\,\,\, u\to v. \eeq
	This implies that the discrete zeros are repulsive for $ m=1$, neutral for $m=2$ and attractive for $m\geq 3$. This indicates that the behavior of
	the smallest distances of the nearest neighbor zeros may be quite different depending on dimensions. 

	
	
	
	
	\bigskip
	
	\emph{Notation.} The symbols $c$ and $C$ represent positive constants independent of $n$, but their specific values may vary from line to line.  
	

	\section{Gaussian random holomorphic fields}\label{badc}
	In this section, we review  some basic concepts in complex geometry and introduce Gaussian random holomorphic sections over \kahler manifolds of arbitrary  dimension. However, we only focus  on 1-dimensional Riemann surfaces later on. For a general introduction to complex geometry, see \cite[Chapters~0 and~1]{GH}.
	\subsection{\kahler geometry }\label{randomsections}
	Let $(M,\omega)$ be a smooth compact K\"ahler manifold of complex dimension $m$.
Locally, on a coordinate chart $U\subset M$, the K\"ahler form can be written as
\[
\omega = \frac{\sqrt{-1}}{2}\partial\bar\partial \phi,
\]
where $\phi$ is a real-valued smooth K\"ahler potential.

Let $(L,h)\to M$ be a positive holomorphic line bundle equipped with a smooth
Hermitian metric $h$.   Its Chern curvature form is given  by
\[
\Theta_h = -\frac{\sqrt{-1}}{2\pi}\partial\bar\partial \log h(e,e),
\]
where $e$ is any local non-vanishing holomorphic section of $L$ over $U$.

We assume that $(L,h)$ polarizes $(M,\omega)$, namely that
\[
\Theta_h = \frac{\omega}{\pi}.
\]
Therefore,  one may choose a local
holomorphic frame $e$ of $L$ such that
\[
h(e,e) = e^{-\phi}:=|e|_h^2.
\]

	Finally we equip $M$ with the Hermitian metric $g$ corresponding to the \kahler form $\omega$ such that the induced Riemannian volume form is
	$$\label{volumeform}dV:= \frac{\omega^m}{m!}. $$
	
	We denote by $H^0(M,L^n)$ the space of global holomorphic sections of the $n$-th tensor power of $L$, and let $d_n:=\operatorname{dim} H^0(M,L^n)$. 	
	The Hermitian
	metric $h$ induces a Hermitian metric $h
	^n$ on $L^n$ as $|e^{ \otimes n}|_{h^n}=|e|_h^n$. 
	The Hermitian metric $h^n$ on $L^n$, together with the volume form $dV$ on $M$,
induces an inner product on the space of square-integrable sections $\mathcal L^2(M,L^n)$,
defined by
\begin{equation}\label{innera}
\langle s_1, s_2 \rangle_{h^n}
:= \int_M h^n(s_1,s_2)\, dV,
\qquad s_1, s_2 \in \mathcal L^2(M,L^n).
\end{equation}
We denote by
\[
\Pi_n : \mathcal L^2(M,L^n) \rightarrow H^0(M,L^n)
\]
the orthogonal projection. The integral kernel of the projection operator $\Pi_n$ is called the Bergman kernel. Let $\big\{s_{1},..., s_{d_n}\big\}$
	be an orthonormal basis for $\hn$ with respect to the inner product~\eqref{innera}, then the Bergman kernel reads, 
\[
\Pi_n (z,w)=	\sum_{j=1}^{d_n} s_j(z) \otimes s_j(w)^*.
\]
	

	A classical example  is the Fubini-Study metric of the hyperplane line bundle over the complex projective space $(\mathcal O(1), h_{FS})\to (\CP,\omega_{FS})$.  
	In an affine coordinate, the \kahler form and the \kahler potential for the Fubini-Study metric are
	$$
	\omega_{FS}=\frac{\sqrt{-1}}{2}\frac{dz\wedge d \bar z}{(1+|z|^2)^2},\,\,\,\phi_{FS}(z)=\log (1+|z|^2).
	$$ We equip
	$\mathcal O(1)$ with its
	Fubini-Study metric, where  we can choose a non-vanishing local frame $e(z)$ such that
	$$|e(z)|^2_{h_{FS}}=e^{-\phi_{FS}}=\frac 1{1+|z|^2}.$$
	An orthonormal basis of $H^0(\CP,\mathcal O(n))$ under the inner product \eqref{innera} is now given by
	\begin{equation}\label{basis1}
		\left\{\sqrt{\pi^{-1}(n+1) {n\choose j}}z^j \right\}_{j=0}^{n}.
	\end{equation}
	
	Another classical example is  the Riemann theta functions over Abelian varieties. We refer to \cite[p.~1--11]{Mu} and \cite[Section 6, Chapter 2]{GH} for more details. For simplicity, consider 1-dimensional complex torus $\mathbb T=\mathbb{C} /\mathbb{Z}+\sqrt{-1} \mathbb{Z}$. Write $z=x+\sqrt{-1}y$, where $x$ and $y \in \mathbb{R}$ can be viewed as the periodic coordinate of $\mathbb T$ with period 1. Consider  the Riemann theta function $$\theta(z)=\sum_{k\in\mathbb{Z}} e^{-\pi k^{2}+2\pi i k z}.$$ The zero divisor of $\theta$ on $\mathbb{T}$ consists of a single point  \(
p=\frac{1+\sqrt{-1}}{2}\quad (\mathrm{mod}\ \mathbb Z+i\mathbb Z)
\).  Define the line bundle $L:=\mathcal{O}(p)\to\mathbb{T}$.
By Riemann-Roch Theorem, $\dim H^{0}(\mathbb{T},L^n)=n$. Moreover, $H^{0}(\mathbb{T},L)$ is spanned by $\theta(z)$; and $H^0\left(\mathbb T, L^n\right)$ is generated by:
	$$
	\theta_j(z)=\sum_{k \in \mathbb{Z}} e^{-\pi n (\frac j n+k)^2+2 \pi \sqrt{-1}n(\frac j n+k)z}, \quad j \in \mathbb{Z} / n \mathbb{Z}.
	$$
	In addition, $\theta_j(z)$'s are holomorphic functions over $\mathbb{C}$ and satisfy the following quasi-periodicity relations:
	$$
	\theta_j(z+1)=\theta_j(z), \quad \theta_j(z+\sqrt{-1})=e^{-2 \pi \sqrt{-1}nz+n \pi} \theta_j(z).
	$$
	The set of functions $$\label{theta}\left\{{\pi^{-1/2}(2n)}^{1/4}\theta_j(z)\right\}_{j=0}^{n-1}$$ forms an orthonormal basis with respect to the inner product \eqref{innera}, under the \kahler form and the \kahler potential, $$\omega_{\mathbb T}=\frac{\pi\sqrt{-1}}{2} {dz\wedge d \bar z},\quad  \phi_{\mathbb T}=2\pi y^2.$$ 
	
	Finally, we introduce the \kahler normal coordinate. 
	Consider a complex $m$-dimensional \kahler manifold $(M, \omega)$, and let $z_0$ be a fixed point in $M$. We can choose a \kahler normal coordinate  $\big\{z^1,.., z^m\big\}$  in a neighborhood $U\subset M$ around $z_0$ such that $z_0=0$, and 
	the \kahler form at $z_0$ is  (\cite[equation~(1.3)]{T}): $$\label{adapted}\,\, \omega(z_0)= \frac {\sqrt{-1}} {2}\sum_{j=1}^m dz^j\wedge d\bar{z}^j,$$
and it has the following Taylor expansion around $z_0$ (\cite[p.~107]{GH}), 
	\begin{equation}\label{taylorforkahler} \omega(z)= \frac {\sqrt{-1}} {2}\sum \big(\delta_{ij}+[2]\big) dz^i\wedge d\bar{z}^j,
	\end{equation}
	where the $[2]$ here represents a term that vanishes up to order 2 at $z_0=0$. The same expansion holds for the Hermitian metric $g$ induced by $\omega$.

	\subsection{ Gaussian random holomorphic fields}\label{Gaussianva}

	The inner product \eqref{innera} induces a complex Gaussian probability measure on the space of global holomorphic sections $\hn$. Such Gaussian random holomorphic section has the expression: 
	\begin{equation}\label{standardgauss}
		s:=\sum_{j=1}^{d_n}b_j s_{j},\end{equation}
	where $\big\{s_{1},..., s_{d_n}\big\}$
	is an orthonormal basis for $\hn$ with respect to the inner product \eqref{innera}, and $\big\{b_1,..., b_{d_n}\big\}$  are i.i.d. standard complex Gaussian random
	variables that satisfy, 
	\beq\label{gaussiancomplex}\mathbb Eb_i=0,\quad \mathbb Eb_ib_j=0,\quad \mathbb Eb_i\bar b_j=\delta_{ij}.\eeq
	The covariance kernel of the Gaussian random holomorphic sections is given by the Bergman kernel
$$\begin{aligned}
\mathrm{Cov}(s(z), s(w)) 
&= \mathbb{E} \big( s(z) \otimes s(w)^* \big) = \sum_{j=1}^{d_n} s_j(z) \otimes s_j(w)^*=:\Pi_n(z, w). \\
\end{aligned}$$
	Under the local coordinate,  we write  $s_j=f_je^{\otimes n}$, then locally 
\begin{equation}\label{locald}
s(z)=\Big[ \sum_{j=1}^{d_n}b_j f_j(z)  \Big] e^{\otimes n}. 
\eeq
In most cases, it is more convenient to work with the locally defined Gaussian process 
	\beq\label{liftcir}\tilde s(z):= \Big[\sum_{j=1}^{d_n}b_j f_j(z)\Big]e^{-n\phi(z)/2},\eeq
	whose covariance kernel is given by
	 \beq\label{relat}\cov(\tilde s(z), \tilde s(w) )=\sum _{j=1}^{d_n}f_j(z)\overline{f_j(w)} e^{-n\phi(z)/2-n\phi(w)/2} =: \tilde F_n(z, w).\eeq
		Actually $\tilde s$ should be interpreted as the lift of the holomorphic section  $s$ of the  line bundle  to its equivariant function on the unit circle bundle  induced by $h$,  and $\tilde F_n(z, w)$ is the lift of the Bergman kernel which is the Szeg\"o kernel of level $n$ of the circle bundle. We refer to \cite[Section 1.2]{BSZ1}  for more   background  on circle bundle, Szeg\"o kernel, etc. By convention, throughout this article we will continue to refer to $\tilde F_n(z,w)$ as the Bergman kernel.


	

	
	On diagonal, the Bergman kernel has the following $C^\infty$ full expansion of the Tian-Yau-Zelditch Theorem \cite{D, T, Ze2}, 
	\begin{equation}\label{full}
		\tilde F_n(z,z)= \frac{n^m}{\pi^m}(1+a_1(z)n^{-1}+a_2(z)n^{-2}+\cdots),
	\end{equation}
	where all terms $a_j$ are computable and they are polynomials of curvatures, in particular, $a_1(z)$ is half of the scalar curvature of $\omega$.
	
	For the general setting of the compact \emph{almost complex symplectic manifolds},  under the normal coordinate and preferred frame, Shiffman-Zelditch proved that the Bergman kernel admits the $C^\infty$ full expansion about the rescaling limit around a point (\cite[Theorem 3.1]{SZ4}), 
	\begin{equation}\label{rescalinglimit12}
		\begin{split}&\tilde F_n(z_0+\frac u{\sqrt n},z_0+\frac v{\sqrt n})\\ =& \frac{n^m}{\pi^m} e^{u\cdot \bar v-\frac 12 |u|^2-\frac12|v|^2}\left(1+\sum_{r=1}^\ell n^{- r/2}p_r(u,v)+n^{-(\ell +1)/2}e_{n\ell}(u,v) \right),\end{split}
	\end{equation} 
	where each $p_r(u, v)$ is a polynomial, $e_{n\ell}$ and its derivatives are bounded on any compact subset in $\mathbb C^2$. 
	Actually, given any $\epsilon>0$,  for any $\ell\geq 0$, any $|u|, |v|\leq \log^2 n$ and any order of partial derivatives $\alpha,\alpha'$,  one has the uniform estimate (see \cite{DL} and also \cite[Theorem 4.2.1]{MM1}),
	\begin{equation}\label{rescalinglimit1234}
		\begin{split}&\Big|\frac{\partial ^{|\alpha|+|\alpha'|}}{\partial^{\alpha} u \partial^{\alpha'} \bar v }\Big(\frac {\pi^m} {n^m}\tilde F_n(z_0+\frac u{\sqrt n},z_0+\frac v{\sqrt n})\\ & -(1+\sum_{r=1}^\ell n^{- r/2}p_r(u,v))e^{u\cdot \bar v-\frac 12 |u|^2-\frac12|v|^2}\Big)\Big|=O(n^{-(\ell+1-\epsilon)/2}).\end{split}
	\end{equation} 
	Moreover, if we further restrict $u,v$ to stay in some compact set of $\mathbb{C}$ (independent of $n$), then we can remove the $\epsilon$ from the right  hand side of \eqref{rescalinglimit1234}. 
	
	As a corollary of \eqref{rescalinglimit12},  Shiffman-Zelditch proved that  the Bergman kernel and its derivatives exhibit exponential decay as $n$ large (\cite[Lemma 5.2]{SZ4}), 
	\beq\label{expdls}
	\|\tilde {F}_n(z, w)\|_{C^k} \leq Cn^{m+\frac{k}{2}} \exp \left(-c {d}_g(z, w) \sqrt{n}\right)
	\eeq
	where $c>0$ is some uniform constant, $C>0$ only depends on $k$, and the norms of the derivates of $\tilde {F}_n(z, w)$ is induced by the Riemann metric $g$.


	In the special case of polarized compact \kahler manifolds, one has $p_1=0$ (see, e.g., \cite[Theorem 4.1.22, Theorem 4.1.25 and Remark 4.1.26]{MM1} and \cite[Theorem 2.3]{LS}).   In particular, one has,
	\begin{equation}\label{rescalinglimit1222}
		\begin{split}&\tilde F_n(z_0+\frac u{\sqrt n},z_0+\frac v{\sqrt n})= \frac{n^m}{\pi^m} e^{u\cdot \bar v-\frac 12 |u|^2-\frac12|v|^2} \Big(1+n^{-1}\big(\frac 12s(z_0) \\ & +\frac18R(u, \bar u, u, \bar u)+\frac18R(v, \bar v, v, \bar v)-\frac14R(u, \bar v, u, \bar v)\big)+n^{-3/2}e_{n3} \Big),\end{split}
	\end{equation} 
	where the curvature tensor $R(s,\bar t, u,\bar v)=R_{j\bar k p\bar q}(z_0) s_j\bar t_k u_p \bar v_q$. 
	For example, in the case of  $(\mathcal O(1), h_{FS})\to (\CP,\omega_{FS})$,  the Bergman kernel reads, 
	$$\tilde F_n(z,w)=\frac{ n+1}{\pi}\frac{(1+z\bar w)^n}{\sqrt {(1+|z|^2)^n(1+|w|^2)^n}}.$$
	And its rescaling admits the full expansion (\cite[equation (33)]{LS}), 
	\begin{align*}\frac{1}{n}&\tilde F_n(\frac u{\sqrt n},\frac v{\sqrt n})=\frac 1 \pi e^{u\cdot \bar v-\frac 12 |u|^2-\frac12|v|^2} \Big(1+n^{-1}\big(1-\frac{(u\cdot \bar v)^2}2+\frac{|u|^4+|v|^4}4 \big)+ \cdots \Big).\end{align*}
	\subsection{Two classical models}
At the end of this section, we introduce two classical Gaussian random holomorphic fields that have been extensively studied.
	
	The first model is defined on complex projective space.  Recalling the orthonormal basis \eqref{basis1},   in the affine  coordinate, the Gaussian random holomorphic section  reads 
	$$\label{su2}p(z):=\sqrt{\frac{n+1}{\pi}} \sum_{j=0}^n b_j \sqrt{{n\choose j}}z^j  $$
	which is equivalent to the so-called \su random polynomials up to a common factor of magnitude \cite{BBL, H}. 

	The second model is defined on the complex plane. It is the Gaussian Entire Function (GEF) introduced in the introduction:
\beq\label{analyticm}
g(z) = \sum_{j=0}^{\infty} \frac{b_j}{\sqrt{j!}}\, z^j,
\eeq
where 
\(
\left\{ \frac{z^j}{\sqrt{j!}} \right\}_{j=0}^{\infty}
\)
forms an orthonormal basis of the Bargmann--Fock space of $\mathcal L^2$-integrable holomorphic functions
\(
\mathcal H\big(\mathbb C, \pi^{-1} e^{-|z|^2} d\ell \big),
\)
and $d\ell$ denotes the Lebesgue measure on $\mathbb C$.

	\section{ Random zeros}\label{randomzeros}
	
From now on, we restrict our attention to the case where \(M\) is a compact Riemann surface.  In this section, we review and derive basic properties of the correlation functions of zeros of Gaussian holomorphic sections over such surfaces.

	


We recall the Gaussian holomorphic section defined in \eqref{standardgauss} and its local expression \eqref{locald}.    It was proved in Theorem 1.1 of \cite{SZ1} that the expected density of its zeros satisfies
	\beq\label{zerodis}\mathbb E\Big( \frac 1 {n}\sum_{z_i\in Z_n}\delta_{z_i}\Big)\to \frac{\omega}{\pi}, \quad \mbox{as $n\to\infty$}\eeq
	in the sense of distribution. In particular, for \su random polynomials, the expected density is given by $ {\omega_{FS} }/\pi$, i.e., the uniform measure on $S^2$, exactly.
	
	
	Given  $Z_n$ in \eqref{defzn},
	we define another point process on $M^k:=\underbrace{M\times\cdots \times M}_k$ by
	\begin{equation}\label{znkdef}
		Z_n^{(k)}:=\sum_{\substack{z_{i_1}, \cdots,  z_{i_k}\in Z_n \\ \textrm{pairwise distinct}}}\,\,\delta_{(z_{i_1},\cdots, z_{i_k})}.
	\end{equation}
	Then the $k$-point correlation function $\rho_k$  of random zeros is defined as, 
	\beq\label{corre}\mathbb E[Z_n^{(k)}(B_1\times \cdots \times B_k)]=\int_{B_1\times \cdots \times B_k}\rho_k(z_1,\cdots, z_k)\frac{\omega(z_1)}{\pi}\wedge \cdots \wedge \frac{\omega(z_k)}{\pi}, \eeq
	where   $B_1, \cdots, B_k$ are Borel sets in  $M$.  In particular,  when $B_1=\cdots=B_k=B$,  it yields the $k$-th factorial moment of the number of points falling in $B$, 
	\beq \label{fmm}\mathbb E \left(\frac{(Z_n(B)!)}{(Z_n(B)-k)!}\right)=\int_{B^k}\rho_k(z_1,\cdots, z_k) \frac{\omega(z_1)}{\pi}\wedge \cdots \wedge \frac{\omega(z_k)}{\pi}.\eeq
	In fact, for any point process on Riemann surfaces, its correlation function, if exists,  can always be defined in the same manner as \eqref{corre}.

	We define the diagonal set of $M^k$
	$$ \Delta^{(k)}:=\{ (z_1,..,,z_k)\in M^k | \mbox{$z_i=z_j$ for some $i\neq j$}  \}.$$
	 Let $$\nabla=\frac \partial {\partial z}+A_n(z)$$ be  any connection of the line bundle $L^n$.  And thus locally we have
	 $$\nabla s=\Big[\sum_{j=1}^{d_n}b_j \Big(\frac {\partial f_j(z)}{\partial z}+A_n(z)f_j(z)\Big)\Big] dz \otimes e^{\otimes n}$$
	
	For  $(z_1,\ldots, z_k)\in M^k\backslash \Delta^{(k)}$, in the local coordinate, 
	the correlation function  $\rho_{k}(z_1,.., z_k)$ of zeros of Gaussian random holomorphic sections is given by the following Kac-Rice type formula (cf.~\cite[Theorem 11.2.1]{AT} and \cite[Section 2.2]{BSZ1}):
	\beq\label{rhord20}\begin{split}&\rho_k(z_1,\cdots, z_k)\\ =&\frac{\mathbb E\Big(\prod_{i=1}^k  \Big|\sum b_j \big(\frac {\partial f_j(z_i)}{\partial z}+A_n(z_i)f_j(z_i)\big)\Big|^2\Big|\sum b_j f_j(z_i)=0, i=1,..,k \Big)}{\det\left( F_n\left(z_i, z_{j}\right)\right)_{1 \leq i, j^{} \leq k}},\end{split}
	\eeq
where  $$  F_n\left(z, w\right): =\sum _{j=1}^{d_n}f_j(z)\overline{f_j(w)} .  $$

In \cite[Section 2.2]{BSZ1},  Bleher-Shiffman-Zelditch  further express the numerator of the conditional expectation in \eqref{rhord20} in terms of the Gaussian covariance matrix, using the standard Gaussian regression formula (see equations (42)–(43) in \cite{BSZ1}).
Both formulations are standard instances of the Kac-Rice formula.

	It's important to note that the choice of connection is arbitrary in our setting, since we are considering zeros of holomorphic sections, which are independent of the choice of derivative. This can be seen directly from the conditional expectation in formula \eqref{rhord20}: when conditioning on
 $$\sum_j b_j f_j(z_i) = 0,$$ we always have
\[
\sum_j b_j \big(\frac{\partial f_j(z_i)}{\partial z} + A_n(z_i) f_j(z_i)\big) = \sum_j b_j \frac{\partial f_j(z_i)}{\partial z},
\] which is independent the choice of of $A_n$. 
In practice, one may therefore choose $\nabla$ to be either the meromorphic flat connection $$\nabla^m:=\frac \partial {\partial z},$$   or 
	the smooth Chern connection $$\nabla^c:=\frac \partial {\partial z}-n\frac{\partial \phi}{\partial z},$$ or any other connection that is convenient for the computation. 
	
	Furthermore, formula \eqref{rhord20} can be rewritten as 
	\beq\label{rhord2}\rho_k(z_1,\cdots, z_k)=\frac{\mathbb E\left(|\widetilde {\nabla s}(z_1) |^2\cdots |\widetilde {\nabla s} (z_k)|^2\big|\tilde s(z_1)=\cdots=\tilde s(z_k)=0\right)}{\det\left(\tilde F_n\left(z_i, z_{j}\right)\right)_{1 \leq i, j^{} \leq k}},
	\eeq
	where \(\tilde F_n\) is the Bergman kernel as in \eqref{relat}, \(\tilde s\) is the Gaussian process defined in \eqref{liftcir} which is the lift of $s$ to its equivariant function on the circle bundle, and
	\beq \label{defdev}{\widetilde {\nabla s}}:=   \Big[\sum_{i=1}^{d_n}b_i \Big(\frac {\partial f_i(z)}{\partial z}+A_n(z)f_i(z)\Big)\Big]e^{-n\phi(z)/2}
	\eeq is the lift of $\nabla s$ on the circle bundle. 
    

    
As a direct consequence of the full expansion of the Bergman  kernel off-diagonal \eqref{rescalinglimit1222}, under the normal coordinate Bleher-Shiffman-Zelditch  proved the universal limit (\cite[Theorem 3.6]{BSZ1}),  
	\beq\label{universalk}\frac 1{n^{k}}\rho_k(z_0+\frac{z_1}{\sqrt n},..,z_0+ \frac{z_k}{\sqrt n})= \rho^\infty_k(z_1,..., z_k)+O\big({n}^{-1}\big), \eeq
	where $\rho^\infty_k(z_1,..., z_k)$ is the  correlation function for zeros of GEF defined in \eqref{analyticm}.  	In Corollary \ref{cords} below,  we will see the error term vanishes on the diagonal set  of $\mathbb{C}^k$,
	$$
	\{(z_1,\ldots, z_k)\in \mathbb{C}^k:z_i=z_j \mbox{ for some }i\neq j\},
	$$
	and	 $\rho_k$ can be extended smoothly to the whole $\mathbb C^k$.
We continue to denote this set by $\Delta^{(k)}$,  which will not cause any confusion.


	As a remark, in \cite{BSZ1, BSZ3}, Bleher-Shiffman-Zelditch proved \eqref{universalk} for the general compact \emph{almost complex symplectic manifolds}, where the error term in \eqref{universalk} is $O\big(n^{-1/2}\big)$ due to the non-zero value of the term $p_1$ in the rescaling limit \eqref{rescalinglimit12}. However, in the specific case of compact \kahler manifolds with polarized line bundes, one has $p_1=0$ in \eqref{rescalinglimit1222} (e.g. \cite[Remark 4.1.26]{MM1} and \cite[Theorem 2.3]{LS}). This, in turn, leads to the error term in \eqref{universalk} being of order\footnote{We would like to thank B. Shiffman for  clarifications regarding this matter.} $1/n$.

	
	For the limiting 2-point correlation function, one has an explicit formula (\cite[equations (106) and (107)]{BSZ1}),
	\beq \label{universal2}\rho_2^\infty(z, w) =H\big(\frac 12|z-w|^2\big), 
	\eeq
	where, as $t\to 0$,  \beq \label{Ht} \begin{split}H(t)&=\frac{\left(\sinh ^2 t+t^2\right) \cosh t-2 t \sinh t}{\sinh ^3 t}=t-\frac{2}{9} t^3+\frac{2}{45} t^5+O(t^7). \end{split}\eeq
	Then, one has \beq\label{rep}\rho_2^{\infty}(z,w)\to  0 \quad \mbox{as}\quad |z-w|\to 0,\eeq i.e., the nearby zeros are repulsive. 
	As a remark,  \eqref{universal2} and \eqref{Ht}  have been derived  in \cite{BBL, H} for the special case of \su random
	polynomials.

	Recall the Kac-Rice formula \eqref{rhord2}, it is important to note that difficulties arise near the diagonal $\Delta^{(k)}$, where both the numerator and the denominator tend to zero. In \cite{A, AL2}, instead of complex zeros of Gaussian holomorphic sections in $H^0(M, L^n)$ over compact Riemannian surfaces,  Ancona and Letendre studied the linear statistics of the real zeros  of Gaussian real sections in $\mathbb RH^0(M, L^n)$ over a compact real Riemann surface equipped with an antiholomorphic involution. The proofs in \cite{A, AL2} are based on the method of the divided differences, and the key ingredient is the exponential decay of the Bergman kernels together with its universal limit. The arguments can be applied for complex zeros of Gaussian holomorphic sections, as the real case and the complex case share exactly the same Bergman kernel.  Therefore, the results in \cite{A, AL2} hold for complex zeros. 
	
	We first have the following short-range behavior. 
	\begin{thm} \label{shortra}Under the normal coordinate around $z_0$,   
    for $n$ large enough and
    $(z_1,...,z_k)\notin \Delta^{(k)}$ where $|z_i|\leq \log^2 n$,  it holds that 
		\beq\label{localbound}\frac 1{n^k}\rho_k(z_0+\frac{z_1}{\sqrt n},..., z_0+\frac{z_k}{\sqrt n}) \leq  C \prod_{1\leq i < j\leq k}\min\big \{|z_i-z_j|^{2}, 1\big\},\eeq 	where $C>0$ is some uniform constant only depending on $k$. 
	\end{thm}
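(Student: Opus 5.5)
We would prove \eqref{localbound} by the divided-difference method of Ancona--Letendre \cite{A, AL2}, applied to the rescaled field
\[
X_n(u):=n^{-1/2}\,\tilde s\Big(z_0+\frac{u}{\sqrt n}\Big)\,e^{(\text{local gauge})},\qquad |u|\le 2\log^2 n,
\]
with $z_1,\dots,z_k$ regarded as rescaled points. In the normal coordinate and a preferred frame, \eqref{rescalinglimit1234} shows that, after multiplying by the holomorphic factor $e^{|u|^2/2}$, $X_n$ is a Gaussian holomorphic function. Its covariance kernel $K_n(u,v)$ converges in $C^\infty$, uniformly for $|u|,|v|\le \log^2 n$, to the Bargmann--Fock kernel $\pi^{-1}e^{u\bar v}$, with error $O(n^{-(1-\epsilon)/2})$ in each derivative. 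In these variables \eqref{rhord2} reads
\[
\frac{1}{n^{k}}\rho_k=\frac{\mathbb E\big(\prod_i|X_n'(z_i)|^2\,\big|\,X_n(z_i)=0\ \forall i\big)}{\det K_n(z_i,z_j)},
\]
where we choose the flat connection. The connection is irrelevant, as remarked after \eqref{rhord20}.

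\textbf{Clustering into groups.} First we reduce to clusters. Partition $\{z_1,\dots,z_k\}$ into groups by joining $z_i,z_j$ whenever $|z_i-z_j|\le R$, and take transitive closure. Here $R$ is a large constant depending only on $k$. Each group then has diameter at most $kR$, and distinct groups are at distance at least $R$. By the exponential decay \eqref{expdls}, the covariance between the values and derivatives in different groups is $O(e^{-cR})$. A standard perturbation of the Gaussian conditioning (Schur complements) then bounds $\rho_k/n^k$ by $C\prod_{\text{groups}}\rho_{|G|}/n^{|G|}$. Within a group, any pair with $|z_i-z_j|\ge 1$ contributes a factor $\min\{\cdot,1\}=1$. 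It therefore suffices to show, for a single group of bounded diameter,
\[
\frac1{n^k}\rho_k\le C\prod_{i<j}|z_i-z_j|^2,
\]
up to constants absorbing the pairs at distance at least $1$.

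\textbf{Divided differences within a group.} Replace the Gaussian vector $(X_n(z_1),\dots,X_n(z_k),X_n'(z_1),\dots,X_n'(z_k))$ by the vector of Hermite divided differences $X_n[z_1],X_n[z_1,z_2],\dots,X_n[z_1,\dots,z_k,z_1,\dots,z_k]$ of orders $0$ to $2k-1$. This is an invertible linear change of variables. Its triangular structure gives:
\begin{itemize}
\item The conditioning event $\{X_n(z_i)=0\ \forall i\}$ equals $\{X_n[z_1,\dots,z_j]=0,\ j\le k\}$, so $\det K_n(z_i,z_j)=\prod_{i<j}|z_i-z_j|^2\cdot\det\mathrm{Cov}(X_n[z_1,\dots,z_j])_{j\le k}$.
\item On that event, $X_n'(z_i)=\prod_{j\ne i}(z_i-z_j)\cdot(\text{linear combination of higher divided differences with bounded coefficients})$.
\end{itemize}
The numerator is therefore at most $\prod_i\prod_{j\ne i}|z_i-z_j|^2=\prod_{i<j}|z_i-z_j|^4$ times a bounded conditional moment. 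Dividing by the determinant gives $C\prod_{i<j}|z_i-z_j|^2$, provided we have:
\begin{itemize}
\item[(a)] an upper bound on the moments of the divided differences;
\item[(b)] a lower bound on the determinant of the covariance of the divided differences $(X_n[z_1,\dots,z_j])_{j\le 2k}$, uniformly over configurations in a ball of radius $kR$ around points with $|u|\le\log^2 n$.
\end{itemize}
Bound (a) follows from the Hermite--Genocchi representation as an integral of $X_n^{(j)}$ over a simplex, together with the $C^\infty$ bounds on $K_n$.

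\textbf{Main obstacle: the uniform nondegeneracy (b).} Since divided differences are continuous up to the diagonal and the kernels converge in $C^\infty$, the covariance of the divided differences of $X_n$ converges uniformly to that of the Gaussian Entire Function $g$ in \eqref{analyticm}. By translation invariance of the GEF zero process (the Bargmann--Fock kernel is invariant up to a unimodular gauge), the limit covariance depends only on the differences $z_i-z_1$, which lie in a compact set. On this compact set, including coincident points, the limit covariance is nondegenerate. Indeed, a nontrivial linear combination of divided differences of $g$ is a nonzero linear functional on polynomials of degree less than $2k$, and $\{z^j/\sqrt{j!}\}$ are independent coefficients, so its variance is positive. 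Continuity and compactness give a uniform positive lower bound. The uniform $C^\infty$ convergence transfers this bound to $X_n$ for $n$ large, uniformly in $|u|\le\log^2 n$; this is where the $\log^2 n$ window in \eqref{rescalinglimit1234} is used. With (a) and (b) in hand, the estimate follows.
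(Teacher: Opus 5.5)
\textbf{Comparison with the paper.} Your route differs from the paper's at one point, and that is where the gap is. The paper never separates clusters. It clusters at the fixed scale $|z_i-z_j|\le 1$ and takes divided differences of all clusters simultaneously. It then quotes Ancona's uniform lower bound on the determinant of the \emph{joint} covariance of these divided differences, together with his numerator bound (Propositions 4.22--4.23 of Ancona). Those bounds hold whatever the mutual positions of the clusters, so no compactness in the global configuration is needed.

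You instead decouple at a large scale $R$ and use compactness inside each group. The numerator in fact decouples without any smallness. Conditioning on more variables lowers the conditional covariance in the Loewner order, and the permanent is monotone on positive semidefinite matrices. H\"older's inequality, together with the equivalence of $L^p$ norms for Gaussian polynomials of bounded degree, then separates the groups. So what your decoupling must really supply is $\det\operatorname{Cov}(V)\ge c\prod_G\det\operatorname{Cov}(V_G)$ uniformly in the configuration, where $V$ and $V_G$ are the vectors of values.

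\textbf{Why the decoupling step fails as written.} The $O(e^{-cR})$ bound on raw cross-covariances from \eqref{expdls} does not give this. Inside a group the covariance of the values degenerates like $\prod|z_i-z_j|^2$ as points merge, so a Schur-complement perturbation in those coordinates is uncontrolled.

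In divided-difference coordinates, the cross-terms must be small compared with the within-group constant $\lambda(kR)$ from your step (b). That constant comes from compactness over configurations of diameter $\le kR$, and it has no rate in $R$. The cross-terms of divided differences are not obviously $O(e^{-cR})$ either, for three reasons:
\begin{itemize}
\item their coefficients blow up as points merge;
\item Hermite--Genocchi integrates over convex hulls that can come close to another group;
\item a recentred holomorphic gauge $e^{-\bar w u}$ brings in factors up to $e^{(kR)^2/2}$ across a group.
\end{itemize}
Since a group may have diameter $k$ times the inter-group separation, ``choose $R$ large depending only on $k$'' is circular.

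\textbf{How to repair it.} The standard fix is a scale-gap pigeonhole.
\begin{itemize}
\item Fix scales $R_0<R_1<\dots<R_N$ with $N=\binom{k}{2}+1$.
\item Choose each $R_{j+1}$ so large, relative to $R_j$ and $\lambda(kR_j)$, that the cross-terms between groups of diameter $\le kR_j$ at mutual distance $>R_{j+1}$ are small compared with $\lambda(kR_j)$.
\item There are at most $\binom{k}{2}$ pairwise distances and $N$ windows, so some window $(R_j,R_{j+1}]$ contains no pairwise distance.
\item Grouping at threshold $R_j$ then gives groups of diameter $\le kR_j$ separated by more than $R_{j+1}$, where the perturbation genuinely closes with constants depending only on $k$.
\end{itemize}

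Also do all recentring with holomorphic gauges $e^{-\bar w u}$. The factor $e^{|u|^2/2}$ is not holomorphic, and the holomorphic kernel converges to $\pi^{-1}e^{u\bar v}$ only in the relative sense on $|u|\le\log^2 n$. A unimodular, non-holomorphic gauge would destroy the continuity of divided differences at the diagonal.

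With these changes your argument becomes a self-contained replacement for the uniform bounds the paper cites.
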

	This upper bound is  essentially the consequence of  the universal limit \eqref{universalk}  together with the following result for the correlation function of zeros of GEF   proved by Nazarov-Sodin (\cite[Theorem 1.3]{NS}), 
	\beq\label{criticalrhoinf}c^{-1} \prod_{1\leq i < j\leq k}\min \{|z_i-z_j|^{2}, 1\}\leq \rho^\infty _k(z_1,..., z_k) \leq c \prod_{1\leq i < j\leq k}\min \{|z_i-z_j|^{2}, 1\},\eeq
	where the $c$ is some positive constant only depending on $k$.
	
	
	Now, we outline the proof of Theorem \ref{shortra}, making slight modifications to those presented in \cite{A, MY} for the  completeness of the article, and the computations have their own interest.

	Given a smooth function $f: \mathbb{C} \rightarrow \mathbb{C}$ and a collection of pairwise distinct points $\underline z:=(z_1,..., z_k) \notin \Delta^{(k)}$, the divided difference of $f$ is  
	$$
	[f]_k\left(z_1, \ldots, z_k\right) \stackrel{\text {}}{:=} \sum_{j=1}^k \frac{f\left(z_j\right)}{\prod_{j^{\prime} \neq j}\left(z_j-z_{j^{\prime}}\right)} .
	$$
	By the definition of the divided differences, we have:
	\beq\label{mmatrix}
	\begin{bmatrix}
		f\left(z_1\right) \\
		\vdots \\
		f\left(z_k\right)
	\end{bmatrix}=T(\underline  z)\begin{bmatrix}
		{[f]_1\left(z_1\right)} \\
		\vdots \\
		{[f]_k\left(z_1, \ldots, z_k\right)}
	\end{bmatrix},
	\eeq 
	where $T(\underline z)$ is a $k\times k$ lower-triangular matrix whose  $(i,j)$-th entry is given by
	$$
	\prod_{\ell=1}^{j-1} (z_i-z_{\ell}),
	$$
	where an empty product is set to be 1. Thus we have 
	$$\operatorname{det} T(\underline z)=\prod_{1 \leq i<j \leq k}\left(z_j-z_i\right).$$ 
	Note that by definition $$[f]_k(z,.., z)=\frac{f^{(k-1)}(z)}{(k-1)!}. $$
	We refer to \cite[Section 5]{AL} for a comprehensive introduction to divided differences. 
	
	For any fixed $z_0\in M$, we choose a normal coordinate such that $z_0=0$ and consider the points $({z_1}/{\sqrt n},\cdots, {z_k}/{\sqrt n})$ in a small neighborhood around $z_0$. 
	We denote $\hat z_i=z_i/\sqrt n$ and $\widehat{{\underline z}}=(\hat z_1,\cdots,\hat z_n)$. Recall the Gaussian process $\tilde s$ in \eqref{liftcir}.  We choose the connection $\nabla=\frac{\partial }{\partial z}- \frac{n\phi}2\frac{\partial\phi}{\partial z}$.  By \eqref{defdev}, we have  $$\widetilde {\nabla s}=    [\sum_{i=1}^{d_n}b_i \big(\frac {\partial f_i(z)}{\partial z}-\frac{n\phi}2\frac{\partial\phi}{\partial z}f_i(z)\big)]e^{-n\phi(z)/2}=[\big(\sum_{i=1}^{d_n}b_i f_i(z)\big)e^{-n\phi(z)/2}]'
	=\tilde s'.$$ By the Kac-Rice formula in \eqref{rhord2}, we have   \beq\label{d}
	\frac 1{n^{k}}\rho_k(\frac{z_1}{\sqrt n},.., \frac{z_k}{\sqrt n})=\frac{\mathbb{E}\left[\prod_{j=1}^k  |\big(\tilde s(\widehat{z}_j)\big)'|^2 \mid  \tilde s(\widehat{z}_1)=\cdots  =\tilde s(\widehat{z}_k)= 0\right]}{\operatorname{det}\left[\operatorname{Cov}\left( \tilde s\left(\widehat{z}_j\right)\right)_{j=1}^k\right]}.
	\eeq
	
	The configuration $\{\hat z_1, ...,\hat  z_k\}$ will induce a partition of $\{1, ..., k\}$ as follows. We draw an edge between two vertices $p$ and $q$ if $|\hat z_p-\hat z_q|\leq 1/\sqrt n$. This leads to a graph with vertex set $\{1,\ldots, k\}$, whose connected components induce a partition $\mathcal P=\{I_1, .., I_\ell\}$ such that $\cup_{i=1}^\ell I_i=\{1, ..., k\}$ and $I_i\cap I_j=\emptyset$ for $i\neq j$.  We denote $I_i=\{{i_1}, ..., {i_{n_i}}\}$, and 
	$\widehat{ \underline{z}}_{I_i}=(\hat z_{i_1}, ...,\hat z_{i_{n_i}}) $, $\tilde s (\underline {\hat z}_{I_i})=(\tilde s(\hat z_{i_1}), ...,\tilde s(\hat z_{i_{n_i}}))^T $ and $[\tilde s](\underline {\hat z}_{I_i})=([\tilde s]_1(\hat z_{i_1}), ...,[\tilde s]_{n_i}(\hat z_{i_1}, ..., \hat z_{i_{n_i}}) )^T$.  
	Then  by \eqref{mmatrix}, we have the divided differences equation, 
	$$
	\begin{bmatrix}
		\tilde s\left(\underline {\hat z}_{I_1}\right) \\
		\vdots \\
		\tilde s\left(\underline {\hat z}_{I_\ell}\right)
	\end{bmatrix}=
	\begin{bmatrix}
		T(\underline {\hat z}_{I_1}) &  &  \\
		& \ddots &  \\
		&  &   T(\underline {\hat z}_{I_\ell})\\
	\end{bmatrix}
	\begin{bmatrix}[\tilde s]\left(\underline {\hat z}_{I_1}\right) \\
		\vdots \\
		[\tilde s]\left(\underline {\hat z}_{I_\ell}\right)
	\end{bmatrix}
	:=\mathbf{T}(\underline z) 	\begin{bmatrix}[\tilde s]\left(\underline {\hat z}_{I_1}\right) \\
		\vdots \\
		[\tilde s]\left(\underline {\hat z}_{I_\ell}\right)
	\end{bmatrix}.
	$$
	By the construction of the partition, we have 
	\begin{equation}
		\begin{split} 
			\mathcal T(\underline z):=&
			\abs{\det(\mathbf{T}(\underline z))}^2\\
			=&	
			|\det (  T(\underline {\hat z}_{I_1}))\cdots   \det(T(\underline {\hat z}_{I_\ell})) |^2= n^{-\sum_{i=1}^\ell n_i(n_i-1) / 2} { \prod_{i<j,i\sim j}|z_i-z_j|^2}.
		\end{split} 
	\end{equation}
	Here we write the notation $i\sim j$ to mean that $i$ and $j$ are in the same connected component. 
	For the denominator,  one has 
	\beq\label{b}
	\operatorname{det}\left[\operatorname{Cov}\left( \tilde s\left(\widehat{z}_j\right)\right)_{j=1}^k\right]=\mathcal T(\underline z) \operatorname{det}\left[\operatorname{Cov}\begin{bmatrix}[\tilde s]\left(\underline {\hat z}_{I_1}\right) \\
		\vdots \\
		[\tilde s]\left(\underline {\hat z}_{I_\ell}\right)
	\end{bmatrix}\right].
	\eeq
	For the numerator,  conditioned on  $\tilde s(\hat z_i)=0$ for all $i=1, .., k$, one can show that (e.g., \cite[Lemma 3.5]{MY}), 
	\beq \label{f}
	\begin{split}
		\prod_{j=1}^k |\big( \tilde s(\widehat{z}_j)\big)'|^2&=n^{-k} \prod_{i=1}^\ell \left(\prod_{q=1}^{n_i} \left( \left| [\tilde s]_{n_i+1}(\underline{\hat{z}}_{I_i}, \hat{z}_{i_q})\right|^2 \prod_{i_r:\, i_r\neq i_q\in I_i}\left|\widehat{z}_{i_q}-\widehat{z}_{i_r}\right|^2\right)\right) \\
		& =n^{-k}\mathcal T(\underline z)^2  \prod_{i=1}^\ell \left(\prod_{q=1}^{n_i}\left| [\tilde s]_{n_i+1}(\underline{\hat{z}}_{I_i}, \hat{z}_{i_q})\right|^2 \right)\\
		& ={ \prod_{i<j, i\sim j} |z_i-z_j|^4 }\prod_{i=1}^\ell \left(\prod_{q=1}^{n_i}\left| \frac{[\tilde s]_{n_i+1}(\underline{\hat{z}}_{I_i}, \hat{z}_{i_q})}{n^{n_i/2}}\right|^2 \right).
	\end{split}
	\eeq
	Set 
	\beq\label{g}
	\begin{aligned}
		\mathcal N_n:=&\mathbb E \left( \prod_{i=1}^\ell \prod_{q=1}^{n_i}\left| \frac{[\tilde s]_{n_i+1}(\underline{\hat{z}}_{I_i}, \hat{z}_{i_q})}{n^{n_i/2}}\right|^2 
		[\tilde{s}](\underline{\hat{z}}_{I_1})=\mathbf{0},\ldots, 	
		[\tilde{s}](\underline{\hat{z}}_{I_{\ell}})=\mathbf{0}	 \right),\\
		\mathcal D_n:=&n^{-\sum_{i=1}^\ell n_i(n_i-1) / 2} {\operatorname{det}\left[\operatorname{Cov}\begin{bmatrix}[\tilde s]\left(\underline {\hat z}_{I_1}\right) \\
				\vdots \\
				[\tilde s]\left(\underline {\hat z}_{I_{\ell}}\right)
			\end{bmatrix}\right]}.
	\end{aligned} \eeq
	Then equations \eqref{d}-\eqref{f} will imply
	\begin{equation*}
		\frac 1{n^{k}}\frac{\rho_k(\frac{z_1}{\sqrt n},.., \frac{z_k}{\sqrt n})}{ \prod_{i<j, i\sim j} |z_i-z_j|^2}=\frac{\mathcal N_n}{\mathcal D_n}.
	\end{equation*}
	Note that in \eqref{g}, for the numerator $\mathcal N_n$, we rewrite the condition $\tilde s(\hat{z}_1)=\cdots =\tilde s(\hat{z}_{k})=0$ in its equivalent form in terms of divided differences $[\tilde{s}](\underline{\hat{z}}_{I_{i}})=\mathbf{0}$, $i=1, .., \ell$, so that the covariance matrix of  $[\tilde{s}](\underline{\hat{z}}_{I_{i}})$ is the one in the denominator $\mathcal D_n$.
	
	Ancona proved that, 
	the denominator $\mathcal D_n$ has positive uniform lower bound (\cite[Proposition 4.22]{A})  and the numerator $\mathcal N_n$ has uniform upper bound (\cite[Proposition 4.23]{A})\footnote{Note that the results in \cite{A} are stated for $|z_i|<C\log n$, but they extend to 
		$|z_i|<\log^2 n$ immediately without modification, since the key ingredient \eqref{rescalinglimit1234} holds in both cases.}, and both bounds only depend on the pattern of the partition $\mathcal P$. Therefore, $\mathcal N_n/\mathcal D_n$ is bounded from above by some constant only depending on the pattern of the partition $\mathcal P$. As there are only finite many partitions of $\{1,..., k\}$, $N_n/\mathcal D_n$ can be bounded from above uniformly. This,  together with the fact that there exists some positive constant $C$ depending only on $k$ such that 
	$$
	C^{-1} \prod_{1\leq i < j\leq k}\min\big \{|z_i-z_j|^{2}, 1\big\} \leq \prod_{i<j, i\sim j} |z_i-z_j|^2 \leq 
	C  \prod_{1\leq i < j\leq k}\min\big \{|z_i-z_j|^{2}, 1\big\}, 
	$$
	will complete the proof of Theorem \ref{shortra}. 
	
	For the configuration $\{ z_1, ..., z_k\}$, we draw an edge between two vertices $p$ and $q$ if $| z_p-z_q|\leq 1$, which induces a partition $\mathcal P=\{I_1, .., I_\ell\}$  of $\{1, ..., k\}$ in the same manner as before. 
For each $I_i=\{{i_1}, ..., {i_{n_i}}\}$, we denote ${ \underline{z}}_{I_i}=( z_{i_1}, ..., z_{i_{n_i}}) $. 	
	
	Let $\tilde g(z):=g(z)e^{-|z|^2/2}$ be the weighted  GEF.  Following the same computations as above,
	set
	\begin{equation*}
		\begin{aligned}
			\mathcal N	=&\mathbb E\left(\prod_{i=1}^\ell \prod_{i_s=1}^{n_i}\left| {[\tilde g]_{n_i+1}(\underline{{z}}_{I_i}, {z}_{i_s})}{}\right|^2 |
			[\tilde g]\left(\underline { z}_{I_1} \right)=\mathbf{0},\ldots, 	[\tilde g]\left(\underline { z}_{I_\ell}\right)=\mathbf{0}
			\right),
		\end{aligned}
	\end{equation*}	
	and
	\begin{equation}
		\begin{aligned}
			\mathcal D=	{\operatorname{det}\left[\operatorname{Cov}\begin{bmatrix}[\tilde g]\left(\underline { z}_{I_1}\right) \\
					\vdots \\  [\tilde g]\left(\underline { z}_{I_\ell}\right)
				\end{bmatrix}\right]}
		\end{aligned}.
	\end{equation}
	Then we have
	\begin{equation}\label{alfer}
		\frac{\rho_k^\infty ({z_1}{},.., {z_k}{})}{  \prod_{i<j, i\sim j} |z_i-z_j|^2  }=\frac{\mathcal N}{\mathcal D}.
	\end{equation} 
	By Nazarov-Sodin's result \eqref{criticalrhoinf},  $\mathcal N/\mathcal D$ has  uniform upper and below bounds on  the whole complex plane $\mathbb C$. Note that Ancona also proved that $\mathcal D$ has some positive uniform lower bound (\cite[Proposition 4.30]{A}).
	
	Lemma 4.6 in \cite{A} implies that the covariance of the divided difference can be expressed as the linear combinations of Bergman kernel and its derivatives. Therefore, the $C^\infty$ full  expansion of the Bergman kernel off-diagonal  in \eqref{rescalinglimit1234} will imply the full  expansion of $\mathcal N_n$ and $\mathcal D_n$ with leading order terms $\mathcal N$ and $\mathcal D$, respectively (e.g., \cite[Proposition 4.35]{A}). By \eqref{rescalinglimit1234}, for any compact set $K$, we have
	\begin{equation}\label{nnn}
		\mathcal N_n=\mathcal N+O(n^{-1}),\,\, 
		\mathcal D_n=\mathcal D+O(n^{-1}), 
	\end{equation}
	where the error is	uniform for all $z_1,\ldots, z_k\in K$.

	As we mentioned before,  both $\mathcal D_n$ and $\mathcal D$ have positive uniform lower bounds. This together with \eqref{nnn} implies
	$$\mathcal N_n/\mathcal D_n=\mathcal N/\mathcal D+O(n^{-1}),$$
	which gives the following improvement of \eqref{universalk}, 
	$$\frac 1{n^{k}}\frac{\rho_k(\frac{z_1}{\sqrt n},.., \frac{z_k}{\sqrt n})}{\prod_{i<j, i\sim j} |z_i-z_j|^2 }=
	\frac{\rho_k^\infty ({z_1}{},.., {z_k}{})}{ \prod_{i<j, i\sim j} |z_i-z_j|^2 }+O\big( n^{-1}\big),$$
	where the error term is smooth for large $n$. This leads to  \begin{cor}\label{cords}
		The error term  in  \eqref{universalk} is smooth, vanishes on diagonal $\Delta^{(k)}$, and decays with an order of $ n^{-1}$ on any compact subset in $\mathbb C^k$.    \end{cor}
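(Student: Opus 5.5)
The plan is to extract Corollary \ref{cords} from the identity just derived,
$$\frac 1{n^{k}}\rho_k\Big(\frac{z_1}{\sqrt n},\dots,\frac{z_k}{\sqrt n}\Big)=\prod_{i<j,\,i\sim j}|z_i-z_j|^2\,\frac{\mathcal N_n}{\mathcal D_n},\qquad \rho_k^\infty(z_1,\dots,z_k)=\prod_{i<j,\,i\sim j}|z_i-z_j|^2\,\frac{\mathcal N}{\mathcal D},$$
which holds for $(z_1,\dots,z_k)\notin\Delta^{(k)}$. Writing the error term in \eqref{universalk} as
$$E_n(\underline z)=\prod_{i<j,\,i\sim j}|z_i-z_j|^2\Big(\frac{\mathcal N_n}{\mathcal D_n}-\frac{\mathcal N}{\mathcal D}\Big),$$
I will prove three things: the two ratios extend smoothly across $\Delta^{(k)}$, their difference is $O(n^{-1})$ uniformly on compacts, and the polynomial prefactor forces $E_n$ to vanish on the diagonal.

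\textbf{Smoothness.} Divided differences of a smooth function extend smoothly to coincident points, via the Hermite–Genocchi integral representation $[f]_k(z_1,\dots,z_k)=\int_{\text{simplex}} f^{(k-1)}(\sum t_iz_i)\,dt$ (in the holomorphic-in-$z$, smooth-in-$\bar z$ sense appropriate to $\tilde s$, as in \cite[Section 5]{AL}). By \cite[Lemma 4.6]{A}, every covariance entry appearing in $\mathcal N_n,\mathcal D_n,\mathcal N,\mathcal D$ is such an integral of derivatives of the Bergman kernel $\tilde F_n$, or of the Bargmann–Fock kernel $e^{u\bar v-|u|^2/2-|v|^2/2}$. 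These entries are therefore smooth in $\underline z\in\mathbb C^k$, including on $\Delta^{(k)}$. The numerators $\mathcal N_n,\mathcal N$ are Gaussian conditional expectations, i.e.\ polynomial expressions in conditional covariances obtained by Gaussian regression (Schur complements). These depend smoothly on the entries wherever the conditioning covariance, whose determinant is $\mathcal D_n$ or $\mathcal D$, is nondegenerate. Ancona's uniform positive lower bounds on $\mathcal D_n$ \cite[Proposition 4.22]{A} and on $\mathcal D$ \cite[Proposition 4.30]{A} then make both ratios smooth, with the bounds uniform for $n$ large.

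\textbf{Partition-dependence (main obstacle).} The partition $\mathcal P$ is defined by the threshold $|z_p-z_q|\le1$, so $\mathcal N_n/\mathcal D_n$ is a priori only piecewise defined. I would resolve this by a local argument: near any given configuration, fix one partition valid in a neighbourhood, or the finest partition compatible with it. The identity for $\rho_k$ holds for every partition that groups only points within distance $1$, because the derivation only uses that pairs in the same block are close enough for the lower bound on $\mathcal D_n$. Hence near $\Delta^{(k)}$ one may fix a partition in which the coinciding indices share a block, and on that neighbourhood the expression is a single smooth formula. Across different partitions the products $\prod_{i\sim j}|z_i-z_j|^2$ differ by factors bounded above and below on compacts. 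So $\rho_k/n^k$ and $\rho_k^\infty$ are each smooth and divisible by $\prod_{i\sim j}|z_i-z_j|^2$ for the local partition, and so is their difference.

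\textbf{Rate and vanishing.} From \eqref{nnn}, applied to the entries through the $C^\infty$ expansion \eqref{rescalinglimit1234} with $p_1=0$ on a compact $K$, the entries converge at rate $O(n^{-1})$ in $C^\ell$. Together with the uniform lower bounds on $\mathcal D_n,\mathcal D$, this gives $\mathcal N_n/\mathcal D_n-\mathcal N/\mathcal D=O(n^{-1})$ on $K$, with derivatives. Multiplying by the prefactor, which is bounded on $K$, yields $E_n=O(n^{-1})$ uniformly on compacts. Finally, on $\Delta^{(k)}$ some $z_i=z_j$; such a pair lies in the same block, so the prefactor vanishes while the bracket stays bounded. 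Hence $E_n=0$ on $\Delta^{(k)}$, and indeed $E_n$ vanishes to second order in $|z_i-z_j|$ there.
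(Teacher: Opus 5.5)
Your route is the paper's: write the error in \eqref{universalk} as $\prod_{i<j,\,i\sim j}|z_i-z_j|^2\,(\mathcal N_n/\mathcal D_n-\mathcal N/\mathcal D)$, get $O(n^{-1})$ on compacts from \eqref{nnn} and the lower bounds on $\mathcal D_n,\mathcal D$, and read off vanishing on $\Delta^{(k)}$ from the prefactor. Your local handling of the partition is more careful than the paper, which just asserts that the error is smooth.

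The step that fails as written is your smoothness argument. Hermite--Genocchi, and the claim that divided differences extend smoothly to coincident nodes, hold for functions of one real variable (the setting of \cite{A} and \cite{AL}) or for holomorphic functions. But $\tilde s=F\,e^{-n\phi/2}$ with $F=\sum_j b_jf_j$ is not holomorphic: $\partial_{\bar z}\tilde s=-\frac n2(\partial_{\bar z}\phi)\,\tilde s$. Already for the weighted GEF
\[
[\tilde g]_2(z,z+h)=\partial_z\tilde g(z)-\tfrac{z}{2}\,\tilde g(z)\,\frac{\bar h}{h}+O(|h|),
\]
so for $z\neq0$ the covariance entries of $[\tilde g]$ have direction-dependent limits at the diagonal. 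With three non-collinear coalescing nodes they are unbounded; for example $[\bar z]_3=\epsilon^{-1}$ at the nodes $\epsilon,\epsilon e^{2\pi i/3},\epsilon e^{4\pi i/3}$. The repeated-node variables $[\tilde s]_{n_i+1}(\underline{\hat z}_{I_i},\hat z_{i_q})$ in $\mathcal N_n$ are not even defined, because the limit does not exist. So ``entries smooth, hence Schur complements smooth'' is unavailable: the kernels you integrate, $\tilde F_n$ and $e^{u\bar v-|u|^2/2-|v|^2/2}$, are precisely the non-holomorphic ones. (The paper's text shares this blind spot, since it applies divided differences to $\tilde s$.)

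The repair is to take divided differences of the holomorphic $F$, and of $g$ in the limit. Set $w_j=e^{-n\phi(\hat z_j)/2}>0$. Then $\tilde s(\hat z_j)=w_jF(\hat z_j)$, and on $\{F(\hat z_j)=0\ \forall j\}$ also $\tilde s'(\hat z_j)=w_jF'(\hat z_j)$. Hence $\prod_jw_j^2$ factors out of both numerator and denominator of \eqref{rhord2} and cancels. In particular $\det\operatorname{Cov}[\tilde s]=\prod_jw_j^2\det\operatorname{Cov}[F]$, which is why $\mathcal D_n$ is smooth even though the entries of $\operatorname{Cov}[\tilde s]$ are not. The entries of $\operatorname{Cov}[F]$ are simplex integrals of $\partial_z^a\partial_{\bar w}^bF_n$, hence smooth up to $\Delta^{(k)}$, and the numerator becomes a permanent of a Schur complement. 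The $O(n^{-1})$ comparison survives because, in the coordinates and frame of \eqref{rescalinglimit1222}, $n\phi(u/\sqrt n)=|u|^2+O(n^{-1})$ in $C^\ell$ on compacts.

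A smaller correction to your partition paragraph. The identity $\rho_k/n^k=\prod_{i\sim j}|z_i-z_j|^2\,\mathcal N_n/\mathcal D_n$ holds off $\Delta^{(k)}$ for every partition. What the local partition must secure is the lower bound on the determinant, and for that the relevant condition is that different blocks stay a fixed distance apart; on a compact set, within-block distances are automatically bounded. For the coincidence partition near a fixed $\underline z^*$, that bound follows from linear independence of GEF jets at distinct points, continuity, and $\mathcal D_n\to\mathcal D$. It does not come from quoting the bounds the paper uses for the threshold partition. With these changes your argument proves the corollary.
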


The next result establishes the splitting property of correlation functions.  Given a configuration of distinct points $(z_1,..,z_k)\in M^k\backslash \Delta^{(k)}$,  for two nontrivial subsets $I, J\subset \{1, 2,.., k\}$ such that $I\cap J=\emptyset$,   the distance between $I$ and $J$ is $$d_g(I, J) :=\min_{i \in I, j\in J} d_g(z_i, z_j).$$
	Given a partition of  $\{1, 2,.., k\}$, denoted as $\mathcal P=\{I_1,.., I_\ell\}$, we define the minimal distance between all clusters (a cluster corresponds to a subset in the partition of $\{1,\ldots, k\}$), 
	$$\eta:=\min_{1\leq i\neq j\leq \ell} d_g(I_i, I_j).$$
	Given a nontrivial subset $I  \subset \{1, 2,.., k\}$,  we again denoted by $\underline{z}_{I}=\left(z_i\right)_{i \in I}$ as a cluster of variables. Then we have 
	
	\begin{thm}\label{longdr}
		Given $(z_1,..., z_k) \in M^k\backslash \Delta^{(k)}$ and a partition $\mathcal P=\{I_1,.., I_\ell\}$ of $\{1,..., k\}$ such that the minimal distance $\eta$ between different clusters is at least $\log^2n/\sqrt{n}$, then   the correlation function exhibits the following splitting property,
		\beq\label{mainsplit2}
		\rho_k(z_1,.., z_k)=\prod_{I_i \in \mathcal{P}} \rho_{|I_i|}\left(\underline{z}_{I_i}\right)+O\big(n^{-\infty}\big),
		\eeq  
		where $n^{-\infty}$ is an error term that decays faster than $n^{-c}$ for any $c>0$. 
	\end{thm}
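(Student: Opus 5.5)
We work directly with the Kac-Rice formula \eqref{rhord2}, written with the divided-difference reduction within each cluster. The idea is that the Gaussian vector $(\tilde s(z_i),\tilde s'(z_i))_{i\le k}$ has a covariance matrix that is block diagonal up to an error of size $O(n^{-\infty})$. Off-diagonal blocks are entries of $\tilde F_n$ and its first derivatives at pairs with $d_g(z_i,z_j)\ge \eta\ge \log^2 n/\sqrt n$. By \eqref{expdls} such an entry is bounded by $Cn^{1+k/2}e^{-c\log^2 n}=O(n^{-\infty})$.

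\textbf{Step 1 (avoiding degeneracy within clusters).} Points inside one cluster may be arbitrarily close, so the raw covariance matrix degenerates. We therefore pass to divided differences cluster by cluster, exactly as in the proof of Theorem \ref{shortra}. We write
\[
\rho_k=\prod_{i<j,\,i\sim j}|z_i-z_j|^2\cdot \frac{\mathcal N}{\mathcal D},
\]
where $\mathcal D$ is the determinant of the covariance of the stacked vector $([\tilde s](\underline z_{I_1}),\dots,[\tilde s](\underline z_{I_\ell}))$. Here $\mathcal N$ is the conditional expectation of the product of the $|[\tilde s]_{n_i+1}|^2$ terms given that this vector vanishes. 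The same identity, with $k$ replaced by $|I_i|$, holds for each factor $\rho_{|I_i|}(\underline z_{I_i})$. The Vandermonde-type prefactors agree on both sides, since only pairs within a cluster appear. It therefore suffices to show
\[
\mathcal N=\prod_i \mathcal N^{(i)}+O(n^{-\infty}),\qquad \mathcal D=\prod_i\mathcal D^{(i)}+O(n^{-\infty}),
\]
together with lower bounds on the $\mathcal D^{(i)}$ and at most polynomial bounds on everything else.

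\textbf{Step 2 (block structure of the covariance).} By Lemma 4.6 of \cite{A}, the covariance of divided differences is an average, or contour integral, of derivatives of $\tilde F_n$ over points in the convex hulls of the clusters. A cross-covariance between clusters $I_a$ and $I_b$ involves only $\tilde F_n(z,w)$ with $z$ near $I_a$ and $w$ near $I_b$. We take the clusters to have small diameter, at most $O(\log^2 n/\sqrt n)$. If a cluster is large, we refine it, or work with the derivatives of $\tilde F_n$ integrated along segments, using the fact that the divided-difference integral representation only samples points in the convex hull. This is the delicate point: we must ensure the convex hulls stay at distance $\gtrsim \eta$. The $C^k$ bound \eqref{expdls} then makes every off-diagonal block $O(n^{-\infty})$, while all entries are $O(n^{C})$. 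Hence $\Sigma=\Sigma_0+E$ with $\Sigma_0$ block diagonal and $\|E\|=O(n^{-\infty})$.

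\textbf{Step 3 (perturbation).} Denote the rescaled determinants by $\mathcal D_n^{(i)}$. Ancona's uniform lower bound \cite[Prop.~4.22]{A} gives $\det\Sigma_0^{(i)}\ge c\,n^{-p_i}$ for an explicit power $p_i$. Continuity of the determinant then gives $\det\Sigma=\prod\det\Sigma_0^{(i)}+O(n^{-\infty})$. For the numerator, Gaussian regression writes the conditional law of the top divided differences through $\Sigma^{-1}$. The bound $\|\Sigma_0^{-1}\|\le n^{C}$ gives $\Sigma^{-1}=\Sigma_0^{-1}+O(n^{-\infty})$, so the conditional covariance agrees with the block-diagonal one up to $O(n^{-\infty})$. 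The expectation of a product of $|\cdot|^2$ terms is a polynomial (Wick) in the entries of that conditional covariance, and its entries are polynomially bounded. Under the block-diagonal covariance the factors are independent, so the expectation factorizes, giving $\mathcal N=\prod\mathcal N^{(i)}+O(n^{-\infty})$. Multiplying back, and using \eqref{localbound}-type polynomial bounds for the factors, yields \eqref{mainsplit2}.

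\textbf{Main obstacle.} The hardest part is the uniformity in Step 2–3. Clusters are arbitrary, and Ancona's bounds are local, stated on windows of size $\log^2 n/\sqrt n$. We must ensure that the constants in the lower bound on $\mathcal D^{(i)}$ are polynomial in $n$ uniformly over all configurations, which we get by further partitioning each cluster at scale $1/\sqrt n$ as in Theorem \ref{shortra}. The expected outcome is that every loss is at most polynomial, which the superpolynomial decay $e^{-c\log^2 n}$ absorbs.
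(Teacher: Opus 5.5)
Your overall route is the one the paper relies on: the paper gives no independent argument and simply defers to Ancona's divided-difference proof \cite[Proposition 3.2]{A} together with the decay estimate \eqref{expdls}. Your Step 3 is a correct rendering of that mechanism: you perturb the determinant, $\Sigma^{-1}$ and the Wick/permanent formula around a block-diagonal $\Sigma_0$, and absorb the polynomial losses into $e^{-c\log^2 n}$. The gap is the point you flag yourself in Step 2, which is not closed as written.

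First, the clusters $I_i$ of an admissible $\mathcal P$ can have macroscopic diameter. The divided differences of Step 1 then do not live in one normal-coordinate window, where \eqref{rescalinglimit1234} and Ancona's bounds are available. Refining a cluster arbitrarily destroys the separation hypothesis, so the statement for $\mathcal P$ does not follow without a recombination argument.

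Second, even for clusters of diameter $O(\log^2 n/\sqrt n)$, the convex hulls need not stay $\gtrsim\eta$ apart. Take seven points equally spaced on a circle of radius $1.1\log^2 n/\sqrt n$ around an eighth point. This is an admissible two-cluster partition, yet the heptagon contains the centre. So the Hermite--Genocchi integral for $\operatorname{Cov}([\tilde s]_7(\underline z_{I_1}),\tilde s(z_8))$ passes through the diagonal of $\tilde F_n$, and a sup bound over the hulls yields nothing.

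Both issues are fixed with tools you already have, used in a different role.

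\emph{Canonical refinement.} Let $\mathcal Q$ be the set of connected components of the graph joining $z_i,z_j$ when $d_g(z_i,z_j)<\log^2 n/(k\sqrt n)$. Because $\eta\ge\log^2 n/\sqrt n$, $\mathcal Q$ refines $\mathcal P$. Its blocks have diameter $<\log^2 n/\sqrt n$, so each sits in one window where Theorem \ref{shortra} and \cite[Proposition 4.22]{A} apply. Distinct blocks are at least $\log^2 n/(k\sqrt n)$ apart, which still gives $O(n^{-\infty})$ through \eqref{expdls}.

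\emph{Recombination.} Prove the splitting for $\mathcal Q$. Then apply the same statement to each $\underline z_{I_i}$, whose own graph partition is $\mathcal Q$ restricted to $I_i$. Recombine using the bound $\rho_m\le Cn^m$ on each block, which comes from Theorem \ref{shortra}.

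\emph{Divided differences inside a block.} Inside each $\mathcal Q$-block, take divided differences only over the components at scale $1/\sqrt n$, as in the proof of Theorem \ref{shortra}, never over the whole block. These components have hulls of diameter $O(1/\sqrt n)$. Hence every cross-block covariance, including those involving the top differences $[\tilde s]_{n_i+1}$, is an average of derivatives of $\tilde F_n$ at points at least $\log^2 n/(k\sqrt n)-O(1/\sqrt n)$ apart, and is $O(n^{-\infty})$. The diagonal blocks are exactly Ancona's $\mathcal D_n$, which are bounded below uniformly. The Vandermonde prefactors on the two sides agree, because each $1/\sqrt n$-component lies in a single $\mathcal Q$-block.

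With this set-up your Step 3 goes through unchanged.
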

	For the case  $k=2$,  \eqref{mainsplit2}  has been  derived by Shiffman-Zelditch (\cite[Theorem 3.16]{SZ6}).
	Based on the method of divided differences, \eqref{mainsplit2} has been established in Proposition 3.2 of \cite{A}  for real zeros  of Gaussian real sections in $\mathbb RH^0(M, L^n)$ (see also \cite[Proposition 2.26]{AL2}).  Theorem \ref{longdr} for the complex case can be derived by  the same proofs as the real case, as the proofs in both cases are essentially a result of the exponential decay of the Bergman kernel and its derivatives (recall \eqref{expdls}).  
	
	For GEF,  the splitting property has been proved by Nazarov-Sodin (\cite[Theorem 1.2]{NS}). In particular, their result  implies that there exists some constant $c>0$ such that 
	\beq\label{splitforc}
	\rho_k^\infty(z_1,.., z_k)=\prod_{I_i \in \mathcal{P}} \rho^\infty_{|I_i|}\left(\underline{z}_{I_i}\right)+O\big(e^{-c\delta}\big), 
	\eeq   
	where $\delta$ is  the minimal distance between different  clusters on $\mathbb{C}$.

	%
	
	
	In \cite{AL},  by the same method of divided differences, Ancona-Letendre derived the similar results to   Theorems \ref{shortra} and \ref{longdr} for the zeros of stationary Gaussian processes with rapid decay covariance kernels. In \cite{MY}, Michelen-Yakir established  analogous results to Theorems \ref{shortra} and \ref{longdr} for the special case of $\operatorname{SU(2)}$ random polynomials.
	
	As a consequence of Theorems \ref{shortra} and   \ref{longdr}, 
	one has the following smoothness and global boundedness of the correlation functions (see also \cite[Theorem 4.7  and Theorem 3.1]{A}), although we will not use this theorem in proving the smallest distances. 
	\begin{thm}\label{unofor}
		The correlation function $\rho_k(z_1, ..., z_k)$ of complex zeros of Gaussian random holomorphic sections can be extended to a smooth function  which vanishes on diagonal for any  $n$ sufficiently large.  Moreover, there exists  $C>0$, for $n$ sufficiently large, one has 
		$$\label{globds}
		\frac 1{n^k}\rho_k(z_1,.., z_k)<C.
		$$ 
	\end{thm}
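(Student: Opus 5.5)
We prove Theorem \ref{unofor} by combining Theorem \ref{shortra} for nearby points with Theorem \ref{longdr} for well-separated clusters, by induction on $k$.

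\emph{Smooth extension.} Fix a configuration and a point $z_0$. In normal coordinates around $z_0$, the computation preceding Corollary \ref{cords} gives
$$\frac1{n^k}\rho_k\Big(\frac{z_1}{\sqrt n},\dots,\frac{z_k}{\sqrt n}\Big)=\prod_{i<j,\,i\sim j}|z_i-z_j|^2\,\frac{\mathcal N_n}{\mathcal D_n}.$$
Here $\mathcal D_n$ is the determinant of a covariance matrix of divided differences. By \cite[Lemma 4.6]{A}, its entries are linear combinations of integrals of derivatives of $\tilde F_n$ over simplices, so they extend smoothly across the diagonal. The same holds for the conditional numerator $\mathcal N_n$, which is a Gaussian regression expression whose entries are again built from divided differences of $\tilde F_n$.

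Because $\mathcal D_n$ is uniformly bounded below (\cite[Proposition 4.22]{A}), the ratio $\mathcal N_n/\mathcal D_n$ is smooth. The points are in compact local charts, so $n$ is fixed and the scaling only matters for uniformity. The prefactor $\prod_{i\sim j}|z_i-z_j|^2$ is smooth and vanishes on $\Delta^{(k)}$.

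One subtlety is that the partition depends on the configuration. Near a diagonal point, however, the colliding indices always lie in the same cluster. The formula can therefore be applied locally with a fixed partition: take the partition in which coinciding points are grouped together, which is legitimate because the divided-difference identities hold for any grouping. This gives smoothness in a neighbourhood of every point of $M^k$, including $\Delta^{(k)}$, with vanishing on $\Delta^{(k)}$.

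\emph{Global bound $\rho_k/n^k<C$.} Take $(z_1,\dots,z_k)\notin\Delta^{(k)}$. Build a graph on $\{1,\dots,k\}$ by joining $i$ and $j$ when $d_g(z_i,z_j)<\log^2 n/\sqrt n$. Its connected components $I_1,\dots,I_\ell$ satisfy the following:
\begin{itemize}
\item different components are at distance at least $\log^2n/\sqrt n$;
\item each component has diameter at most $k\log^2n/\sqrt n$.
\end{itemize}

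If $\ell\ge 2$, Theorem \ref{longdr} gives $\rho_k=\prod_i\rho_{|I_i|}(\underline z_{I_i})+O(n^{-\infty})$, and the claim follows from the induction hypothesis applied to each $\rho_{|I_i|}$.

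If $\ell=1$, all points lie within geodesic distance $k\log^2n/\sqrt n$ of $z_1$. Choose normal coordinates at $z_0=z_1$. The rescaled coordinates then satisfy $|z_i|\le C\log^2 n$, and the footnote says this case is covered after adjusting the constant. Theorem \ref{shortra} then gives $\rho_k/n^k\le C\prod\min\{|z_i-z_j|^2,1\}\le C$.

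The base case $k=1$ follows from the on-diagonal expansion \eqref{full}, since $\rho_1$ is an explicit function of $\tilde F_n$ and its derivatives on the diagonal.

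\emph{Uniformity.} The constants in Theorem \ref{shortra} are uniform in $z_0$ because $M$ is compact and the estimate \eqref{rescalinglimit1234} is uniform in the base point. There are finitely many partitions, so a single $C$ works.

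The main obstacle is justifying that Theorem \ref{shortra} applies at radius $\log^2 n$ times a $k$-dependent constant, with constants uniform in $z_0$. I would handle this by enlarging the window in \eqref{rescalinglimit1234} to $|u|,|v|\le C_k\log^2n$, which the same references permit.
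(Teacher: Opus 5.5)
Your argument follows the paper's route. The paper derives Theorem \ref{unofor} from Theorems \ref{shortra} and \ref{longdr} (citing \cite[Theorems 3.1 and 4.7]{A}), and you do the same: you split into clusters separated by $\log^2 n/\sqrt n$ via Theorem \ref{longdr}, bound each cluster by Theorem \ref{shortra} on a window of size $C_k\log^2 n$ (which the off-diagonal expansion permits), and get the smooth extension vanishing on $\Delta^{(k)}$ from the factorization $\prod_{i\sim j}|z_i-z_j|^2\,\mathcal N_n/\mathcal D_n$.

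One technical fix for the smoothness step: take divided differences of the holomorphic local representative $\sum_j b_j f_j$ rather than of $\tilde s$. The weights $e^{-n\phi/2}$ cancel in the Kac--Rice ratio \eqref{rhord20}, so nothing is lost. By contrast, divided differences of a non-holomorphic function of a complex variable (for example $\bar z$) do not extend continuously to the diagonal, so the claim that the entries of $\mathcal D_n$ extend smoothly fails for $\tilde s$ itself.
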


	As a remark, the crucial point is that Theorem \ref{shortra},  Theorem \ref{unofor} and Corollary \ref{cords} do not hold for higher dimensions.  In \cite{BSZ1},  Bleher-Shiffman-Zelditch studied the simultaneous zeros of $1\leq \ell \leq m$ Gaussian random holomorphic sections over complex $m$-dimensional \kahler manifolds. Again, as a direct consequence of the full expansion of Bergman kernel off-diagonal and the Kac-Rice formula,   the $k$-point correlation function of  zeros still has the  (pointwise) universal limit outside the diagonal set (\cite[Theorem 3.6]{BSZ1}), 
	\beq\label{universalk345ggod}\frac 1{n^{k\ell }}\rho_k(z_0+\frac{z_1}{\sqrt n},..,z_0+ \frac{z_k}{\sqrt n})= \rho^\infty_{k, \ell, m}(z_1,..., z_k)+O\big({n}^{-1}\big). \eeq
	Here,  $\rho^\infty_{k,\ell,m }(z_1,..., z_k)$ is a universal  function.

	However,  it is important to note that, unlike the $1$-dimensional case as in Corollary \ref{cords}, the error term $O(n^{-1})$ only holds on any compact subsets $K\subset (\mathbb C^m)^{ (k)}$, where 
	$$ (\mathbb C^m)^{(k)}:=\{(z_1,\ldots, z_k) \in  (\mathbb C^m)^k, z_i\neq z_j, \forall 1\leq i\neq j\leq k \}.$$
	This is because, for higher dimensional cases,   $\rho_k(z_1,...,z_k)$ may blow up on diagonal. The same behavior applies to $\frac 1{n^{k\ell }}\rho_k(z_0+\frac{z_1}{\sqrt n},..,z_0+ \frac{z_k}{\sqrt n})$ and  $\rho^\infty_{k, \ell, m}(z_1,..., z_k)$.
	
	For example, by recalling the Taylor expansion \eqref{mdim} for the limiting 2-point correlation of discrete zeros with $\ell = m$, 
one obtains distinct short-distance behaviors depending on the dimension of the complex manifold 
(see the discussion following Theorem 1.3 in \cite{BSZ4}).   For $m\geq 3$, we have  
	$$\rho^\infty_{2,m,m}(u,v)\to \infty\,\,\, \mbox{as}\,\,\, u\to v,  \,\,$$
	i.e., nearby zeros are attractive for $m\geq 3$, and thus Theorem \ref{shortra} and Theorem \ref{unofor} should both fail. For $m=2$, we have 
	$$\rho^\infty_{2,2,2}(u,v)\to \frac 34\,\,\, \mbox{as}\,\,\, u\to v,  \,\,$$
	i.e., nearby zeros are neutral, and thus Theorem \ref{shortra} should also fail, but it is quite possible that Theorem \ref{unofor} holds for $m=2$. 
	
	In all cases, we may expect that Theorem \ref{longdr} always holds, as it is essentially the consequence of the exponential decay of the Bergman kernel. 
	
	

	
	

	\section{Proof of Theorem \ref{main1}}\label{proofofmain}
	Now we prove Theorem \ref{main1}.  Fix any $a>0$ and set $$a_n:=n^{-{3}/4}a.$$
	We first prove  the following Lemma \ref{lunis} which provides the limit of  the integration of the 2-point correlation function over a small geodesic ball with radius $a_n$. This limit determines the  intensity of the limiting Poisson point process of the smallest distances between random zeros on Riemann surfaces. 	Given a point $z\in M$, we denote $B_{a_n}(z)$ as the geodesic ball centered at $z$ with radius $a_n$.

	\begin{lem}\label{lunis}
		The integration of the 2-point correlation function over the small geodesic ball of radius $a_n$ has  the universal limit, 
		\beq \label{maintwolimit} \lim_{n\to\infty} \int_{B_{a_n}(z_0)} \rho_2(z_0, w)\frac{\omega(w)}{\pi} =\frac {a^4}4, \mbox{ uniformly in }z_0\in M.
		\eeq
		
	\end{lem}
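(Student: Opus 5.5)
Fix $z_0\in M$ and choose a K\"ahler normal coordinate centred at $z_0$. We rescale by writing $w=z_0+u/\sqrt n$. Since $a_n=n^{-3/4}a$, the geodesic ball $B_{a_n}(z_0)$ corresponds to a region of $u$-values of radius about $n^{-1/4}a$, which shrinks to $0$. By \eqref{taylorforkahler}, the metric in the normal coordinate is Euclidean up to $O(|w|^2)$ errors. Hence the ball is contained between the Euclidean disks $\{|u|\le n^{-1/4}a(1\pm Cn^{-3/2})\}$. For the same reason,
\[
\frac{\omega(w)}{\pi}=\frac{1}{\pi}\bigl(1+O(n^{-3/2})\bigr)\frac{d\ell(u)}{n},
\]
where $d\ell$ is Lebesgue measure in $u$. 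All constants here are uniform in $z_0$ because $M$ is compact and the normal coordinates can be chosen with uniformly controlled Taylor remainders.

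The integral then becomes
\[
\int_{|u|\lesssim n^{-1/4}a}\frac{1}{n^{2}}\rho_2\Bigl(z_0,z_0+\frac{u}{\sqrt n}\Bigr)\,\frac{n\,d\ell(u)}{\pi}\,\bigl(1+o(1)\bigr).
\]
We apply \eqref{universalk} with $k=2$, together with Corollary \ref{cords}, on the compact set $\{|u|\le 1\}$. This gives
\[
\frac{1}{n^2}\rho_2\Bigl(z_0,z_0+\frac{u}{\sqrt n}\Bigr)=\rho_2^\infty(0,u)+E_n(u),
\]
where $E_n$ is smooth, vanishes on the diagonal $u=0$, and is $O(n^{-1})$. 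The main contribution comes from the leading term. By \eqref{universal2} and \eqref{Ht},
\[
\rho_2^\infty(0,u)=H\bigl(\tfrac12|u|^2\bigr)=\tfrac12|u|^2+O(|u|^6).
\]
Integrating in polar coordinates gives
\[
\frac{n}{\pi}\int_{|u|\le n^{-1/4}a}\tfrac12|u|^2\,d\ell(u)=\frac{n}{\pi}\cdot\frac{\pi}{4}\,(n^{-1/4}a)^4=\frac{a^4}{4}.
\]
The $O(|u|^6)$ remainder contributes $O(n\cdot n^{-2})\to0$. Replacing the ball by the slightly perturbed disks, and the volume factor by its $1+O(n^{-3/2})$ version, changes the answer only by a factor $1+o(1)$, because the integrand is nonnegative and radially controlled.

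The main obstacle is the error term $E_n$. A crude bound $E_n=O(n^{-1})$, integrated against $n\,d\ell$ over an area of order $n^{-1/2}$, gives only $O(n^{-1/2})$. That already suffices, so uniform smallness is enough and no vanishing-order information is strictly required.

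The bound must, however, be uniform in $z_0$ and in $u$ down to the diagonal. That uniformity is exactly what Corollary \ref{cords} provides through the divided-difference representation: $\mathcal N_n/\mathcal D_n=\mathcal N/\mathcal D+O(n^{-1})$ uniformly on compacts. Moreover, the constants in \eqref{rescalinglimit1234} are uniform in $z_0\in M$ by compactness. As a safeguard, Theorem \ref{shortra} gives $n^{-2}\rho_2\le C|u|^2$. This alone bounds the integral by $Ca^4$ and justifies dominated-convergence-type arguments.

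Combining these estimates yields \eqref{maintwolimit} uniformly in $z_0$.
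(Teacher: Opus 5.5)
Your proposal is correct and follows essentially the same route as the paper: normal coordinates at $z_0$, rescaling $w=z_0+u/\sqrt n$ with the volume and distance estimates, the universal limit \eqref{universalk} whose $O(n^{-1})$ error integrates to $O(n^{-1/2})$, and $H(t)=t+O(t^3)$, giving $\frac{n}{\pi}\int_{|u|\le n^{-1/4}a}\frac12|u|^2\,d\ell(u)=\frac{a^4}{4}$. Your extra bookkeeping (the ball-versus-disk perturbation controlled by Theorem \ref{shortra}, and uniformity in $z_0$ by compactness) makes explicit what the paper's chain of equalities leaves implicit.
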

	
	The proof of Lemma~\ref{lunis} relies on the rescaling estimates in \kahler normal coordinates for correlation functions and the volume form, many of which also appear in the proof of Lemma~3.2 in \cite{Z}.
	
		 Let $z_0=0$ be the origin of the normal coordinate.    In what follows, fix a local coordinate chart around $z_0$ and consider the rescaling map $$
\phi_n:z\mapsto z/\sqrt{n}.
$$
For any differential form $\alpha$ on $M$, we write
\(
\alpha(z/\sqrt{n}):=(\phi_n^*\alpha)_z.
\)
In other words, $\alpha(z/\sqrt{n})$ denotes the pullback of $\alpha$ under the rescaling map $\phi_n$ in these local coordinates.
	Recall \eqref{taylorforkahler},  for sufficiently large $n$, around $z_0$, the \kahler form  satisfies
	\beq\label{volumeestimate}
	\omega(z/\sqrt{n})= {n}^{-1}\Big(1+O\left(n^{-{1}}|z|^2\right)\Big)d\ell_z, 
	\eeq
	where we denote  $$d\ell_z:=\frac{\sqrt{-1}}2dz\wedge d\bar z.$$ This implies that the geodesic distance between $z_0+ z/{\sqrt n}$ and $z_0$ satisfies 
	\beq\label{distanceest}d_g(z_0, z_0+\frac z{\sqrt n})=\frac{|z|}{\sqrt n}+O\big(n^{-{3/2}}|z|^3\big).\eeq



	By  the small geodesic distance estimate \eqref{distanceest},  the volume estimate \eqref{volumeestimate}, the convergence statement \eqref{universalk}, Corollary \ref{cords} and the Taylor expansion for the 2-point correlation function in \eqref{universal2} and \eqref{Ht},   we have
	\beq\label{computetwo}\begin{split}&\lim_{n\to\infty}\int_{w\in B_{a_n}(z_0)} \rho_2(z_0,w)\frac{\omega(w)}\pi \\  = & \pi^{-1} \lim_{n\to\infty}\int_{|w|\leq a_n}\rho_2(0,w){\omega(w)}\\ =&\pi^{-1}\lim_{n\to\infty} \int_{|z|\leq  n^{1/2}a_n}\rho_2(0, \frac z{\sqrt n})\omega\left(\frac z{\sqrt n}\right)\\ =& \pi^{-1}\lim_{n\to\infty} n  \int_{|z|\leq  n^{1/2}a_n}[\frac1{n^2}\rho_2(0, \frac z{\sqrt n})]d\ell_z \\=&
		\pi^{-1}\lim_{n\to\infty} n \int_{ |z|\leq  n^{-1/4}a}[\rho_2^\infty(0,  z)+O(n^{-1})]d\ell_z 
		\\ =&  \pi^{-1}\lim_{n\to\infty} n  \int_{ |z|\leq  n^{-1/4}a}\rho_2^\infty(0, z)d\ell_z
		\\=& \pi^{-1}\lim_{n\to\infty} n  \int_{ |z|\leq  n^{-1/4}a}\frac 12 |z|^2d\ell_z  \,\,\\=&\frac 14 a^4,\end{split}\eeq
	which completes the proof of Lemma \ref{lunis}.  

	
	


	\subsection{An auxiliary point process}
	Recall the definition of $\mathcal S_n$ in \eqref{alldistanorm}. Given a measurable subset $U\subseteq M$ and a bounded measurable set $A\subset [0,\infty)$, to prove Theorem \ref{main1}, 
	it suffices to show that
		\begin{equation}\label{snpo-0}
		\mathcal S_n(A\times U) \Rightarrow  \mbox{Poisson}\left(
		\int_A \frac{x^3}{2}\mathrm{d}x 
		\int_{U}  \frac{\omega}{\pi}\right) \quad\mbox{in distribution}. 
	\end{equation}
	
We will give a detailed proof for $A$ of the form $[0,a]$ where $a>0$. Other forms of $A$  can be treated in the same way. In other words, our goal is to establish that 
	\begin{equation}\label{snpoi}
		\mathcal S_n([0,a]\times U) \Rightarrow  \mbox{Poisson}\left(\frac  { a^4}8 
		\int_{U}  \frac{\omega}{\pi}\right) \quad\mbox{in distribution}. 
	\end{equation}
	Given the set of random  zeros $Z_n=\{z_1,\cdots, z_n\}$, we define  the following set containing all pairs of zeros whose distances are within $a_n$,
	\beq\label{ira}\mathcal I_n:=\Big\{ (i,j):\, 1\leq i< j\leq n, \, d_g(z_i,z_j)<a_n\Big\}.\eeq
	By definition, we have 
	$$\label{allequ}  
	\abs{\mathcal I_n}=\mathcal S_n([0,a]\times M).
	$$
	We define a subset of $\mathcal{I}_n$ as 
	\begin{equation}\label{tildein}
		\begin{split}	\tilde{\mathcal I}_n:=\Big\{&
			(i,j)\in \mathcal I_n: \nexists (k,\ell)\in \mathcal I_n, (k,\ell)\neq (i,j), \\ & d_g(\{z_i,z_j\}, \{z_k,z_{\ell}\})\leq 4\log^2 n/\sqrt n
			\Big\},\end{split}
	\end{equation}
	where  we define the distance between two pairs of points as
	\beq\label{distancebetweenpairs}
	d_g(\{z_i,z_j\}, \{z_k,z_{\ell}\})=\min_{x\in \{z_i,z_j\}, y\in  \{z_k,z_{\ell}\} }d_g(x,y).
	\eeq
	Recall that $\hat{z}_{ij}$ is a point uniformly chosen from $\{z_i,z_j\}$.  We define an auxiliary random process as 
	\begin{equation}\label{altessssds} \mathlarger\chi_n:=\sum_{(i,j)\in    \tilde{\mathcal I}_n} \delta_{\hat{z}_{ij} }.   \end{equation}
	We first have the following
	\begin{lem}\label{nssn}As $n\to\infty$, one has the convergence,  $$
		\abs{\tilde{\mathcal I}_n}
		\stackrel{\text { }}- \abs{\mathcal I_n}\to0 \,\, \mbox{ in probability. }$$
	\end{lem}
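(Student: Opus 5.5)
Since $\tilde{\mathcal I}_n\subseteq\mathcal I_n$, the difference $|\mathcal I_n|-|\tilde{\mathcal I}_n|$ is nonnegative. It therefore suffices to show $\E(|\mathcal I_n|-|\tilde{\mathcal I}_n|)\to 0$ and then apply Markov's inequality. A pair $(i,j)\in\mathcal I_n\setminus\tilde{\mathcal I}_n$ has another pair $(k,\ell)\in\mathcal I_n$ within distance $4\log^2 n/\sqrt n$. This other pair either shares an index with $(i,j)$ or is disjoint from it. Hence
$$|\mathcal I_n|-|\tilde{\mathcal I}_n|\le 2N_3+2N_4,$$
where $N_3$ counts ordered triples of distinct zeros $(z_1,z_2,z_3)$ with $d_g(z_1,z_2)<a_n$ and $d_g(z_1,z_3)<a_n$ or $d_g(z_2,z_3)<a_n$ (up to relabelling), and $N_4$ counts quadruples of distinct zeros with $d_g(z_1,z_2)<a_n$, $d_g(z_3,z_4)<a_n$ and $d_g(\{z_1,z_2\},\{z_3,z_4\})\le 4\log^2n/\sqrt n$. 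By \eqref{corre} both expectations are integrals of $\rho_3$ and $\rho_4$ over the corresponding sets.

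\emph{Triples.} All three points lie within $2a_n\ll \log^2 n/\sqrt n$ of $z_1$. Pass to normal coordinates at $z_1=z_0$, rescale by $\sqrt n$, and use \eqref{volumeestimate} and \eqref{distanceest}. Theorem \ref{shortra} bounds $n^{-3}\rho_3$ by $C|u_2|^2|u_3|^2|u_2-u_3|^2\le C (n^{-1/4})^6$ on the region $|u_2|,|u_3|\lesssim n^{-1/4}a$. Integrating then gives
$$\E N_3\le C\int_M\frac{\omega(z_0)}{\pi}\; n^3\cdot n^{-2}\cdot n^{-6/4}\,(n^{-1/4})^{4}= O(n^{-3/2})\to0 .$$
The factor $n^3$ comes from $\rho_3$, the factor $n^{-2}$ from the two rescaled volume forms, and the final factor is the volume of the region.

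\emph{Quadruples.} This is the main obstacle, and I split according to $\delta:=d_g(\{z_1,z_2\},\{z_3,z_4\})$.

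If $\delta\ge \log^2 n/\sqrt n$, Theorem \ref{longdr} gives $\rho_4=\rho_2(z_1,z_2)\rho_2(z_3,z_4)+O(n^{-\infty})$. Lemma \ref{lunis} (uniformly in $z_0$) gives $\int_{B_{a_n}(z_1)}\rho_2(z_1,w)\,\omega(w)/\pi\le C$. The point $z_3$ ranges over a set of $\omega$-measure $O(\log^4 n/n)$. Together these give a contribution of order $n\cdot 1\cdot (\log^4 n/n)\cdot 1=O(\log^4 n/n)$: the factor $n$ counts $z_1$, the first $1$ comes from $z_2$, and the last $1$ from $z_4$. Writing this out, $\E\le \int \rho_2(z_1,z_2)\rho_2(z_3,z_4)\ldots\le C n^2\cdot a_n^4 n^2\cdot$(area where $z_3$ is near $z_1$) summed; the total is $O(\log^4 n/n)\to0$. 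The $O(n^{-\infty})$ error integrates to something negligible.

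If $\delta<\log^2 n/\sqrt n$, all four points lie within $O(\log^2 n/\sqrt n)$ of $z_1$. I rescale at $z_1$ and apply Theorem \ref{shortra} with $|u_i|\le \log^2 n$ (valid for large $n$). This gives $n^{-4}\rho_4\le C|u_1-u_2|^2|u_3-u_4|^2\le C n^{-1}$. The integral over the configuration space in rescaled variables is bounded by
$$n^4\cdot n^{-3}\cdot n^{-1}\cdot (n^{-1/4})^2(n^{-1/4})^2\cdot\log^4 n= O(n^{-1}\log^4 n).$$
Here $n^{-3}$ comes from the volume forms of $z_2,z_3,z_4$, $n^{-1}$ from the bound on $\rho_4$, each $(n^{-1/4})^2$ is the area of a small ball for $z_2$ and for $z_4$, and $\log^4 n$ is the area of the range of $z_3$. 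After integrating $z_1$ over $M$ this still tends to $0$.

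Summing the three pieces gives $\E(|\mathcal I_n|-|\tilde{\mathcal I}_n|)\to0$, and Markov's inequality then proves Lemma \ref{nssn}. The delicate points are choosing cutoffs compatible with the hypotheses of Theorems \ref{shortra} and \ref{longdr}, and keeping the distance and volume distortions, via \eqref{distanceest} and \eqref{volumeestimate}, uniform in $z_0$ by compactness of $M$.
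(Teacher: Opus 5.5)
Your argument is essentially the paper's. You dominate $|\mathcal I_n|-|\tilde{\mathcal I}_n|$ by counts of triples and quadruples, write their expectations through $\rho_3,\rho_4$, and bound these with Theorem \ref{shortra} in rescaled normal coordinates, obtaining $O(n^{-3/2})$ and $O(\log^4 n/n)$.

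\textbf{The split of the quadruple case is not needed.} The only real difference from the paper is that you split the quadruple case at $\delta=\log^2 n/\sqrt n$. On the whole quadruple region all four rescaled points lie within $O(\log^2 n)$ of the origin. Every factor $\min\{|u_i-u_j|^2,1\}$ is at most $1$, so \eqref{localbound} gives $n^{-4}\rho_4\le C|u_2|^2|u_3-u_4|^2$ however far apart the two pairs are. Your ``close'' computation therefore already covers everything, and this is what the paper does, applying \eqref{localbound} with rescaled $|u_3|\le 5\log^2 n$. Your close regime also reaches radius $\log^2 n+O(n^{-1/4})$, so you rely on the same harmless extension of Theorem \ref{shortra} beyond $\log^2 n$ anyway, and the split buys nothing.

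\textbf{The far-regime bookkeeping is wrong as written.} The argument via Theorem \ref{longdr} and Lemma \ref{lunis} is valid, but the arithmetic does not match. With the normalization \eqref{corre}, $\int_M\omega(z_1)/\pi=1$. The factor $n$ that ``counts $z_1$'' is already inside $\rho_2(z_1,z_2)\sim n^2$, and $\int_{B_{a_n}(z)}\rho_2(z,w)\,\omega(w)/\pi=O(1)$ uniformly. So the correct bound is $1\cdot O(1)\cdot O(\log^4 n/n)\cdot O(1)$. As you wrote it, $n\cdot 1\cdot(\log^4 n/n)\cdot 1=\log^4 n$, which does not tend to zero. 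Your expression $Cn^2\cdot a_n^4n^2\cdot(\text{area})$ has the same extra factor of $n$.
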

	Once Lemma \ref{nssn} is established, to prove \eqref{snpoi},
	it is sufficient to prove 	$$\label{chipoi}
	\mathlarger {\chi}_n(U)
	\Rightarrow  \mbox{Poisson}\left(\frac  {a^4}8 
	\int_{U}  \frac{\omega}{\pi}\right)\quad \mbox{in distribution},
	$$
	which is   implied by    \beq\label{alterfactor}
	\lim_{n\to\infty}\mathbb E\Big(\frac{ \mathlarger{\chi }_n(U) !}{(\mathlarger{\chi}_n(U)-k)!} \Big) =\Big(\frac   {a^4}8 
	\int_{U}  \frac{\omega}{\pi}\Big)^k\quad\mbox{ for any $k\geq 1$.}\eeq  
	Let $\tilde\rho_k(z_1,.., z_k)$ denote the $k$-point correlation function of the auxiliary point process $ \mathlarger{\chi}_n$. By the definition of the point correlation function in \eqref{fmm}, \eqref{alterfactor} is equivalent to  \beq \label{tilderhok}\lim_{n\to\infty}\int_{{U}^k}\tilde \rho_k(z_1, ..., z_k) \frac{\omega(z_1)}\pi\wedge\cdots\wedge \frac{\omega(z_k)}\pi=\Big(\frac { a^4}8 \int_{U} \frac{\omega}{\pi} \Big)^k.
	\eeq
	The proof of \eqref{tilderhok} will be given in the Section \ref{sec:pfmain1}.
	\begin{proof}[Proof of Lemma \ref{nssn}]  
		By the definitions of $\mathcal I_{n}$ and $\tilde {\mathcal I}_{n}$, we have
		$$\label{snbd1}
		0\leq |\mathcal I_{n}|-|\tilde {\mathcal I}_{n} |   \leq |D|,
		$$
		where\footnote{The reason of separating the analysis into two sets is to match the definition of correlation functions: $D_1$ and $D_2$ can be handled by $\rho_3$ and $\rho_4$, respectively.} the set $D:=D_1\cup D_2$, and $D_1, D_2$ are given by
		\begin{equation*}
			\begin{split}
				D_1:=\Big\{&(z_1,z_2,z_3)\in Z_{n}^{(3)}: d_g(z_1,z_2)\leq a_n, d_g(z_1,z_3)\leq a_n\Big\}, \\
				D_2:=\Big\{&(z_1,z_2,z_3,z_4)\in Z_{n}^{(4)}: d_g(z_1,z_2)\leq a_n, d_g(z_3,z_4)\leq a_n, \\&d_g(z_1,z_3)<5\log^2 n/\sqrt n\Big\}, 
			\end{split}
		\end{equation*}
		where $Z_n^{(k)}$ is defined in \eqref{znkdef}. Indeed, if $(i,j)\in \mathcal I_n\backslash  \tilde{\mathcal I}_n$, then there exists another pair $(k,\ell)\in \mathcal I_n$ such that the distance between 
        $\{z_i,z_j\}$ and $\{z_k, z_{\ell}\}$ is bounded by $4\log^2n/\sqrt{n}$. The set $D_1$ corresponds to the case where there is a nontrivial overlap between these two sets, while $D_2$ accounts for the scenario where all four points $z_i,z_j,z_k,z_{\ell}$ are pairwise disjoint. In the second case, we also have,  for $n$  large enough,
        $$
        \max_{x\in \{z_i,z_j\}, y\in  \{z_k,z_{\ell}\} }d_g(x,y)\leq \log^2 n/\sqrt{n}+2a_n \leq 5\log^2 n/\sqrt{n}.
        $$
       
		To prove that the integer-valued random variable $|D|$ converges to 0 in probability, it is sufficient to prove that both $\E|D_1|$ and $\E|D_2|$ tend to 0. 
		
		By the definition of the correlation function \eqref{corre}, we first have  
		$$\mathbb E|D_1|= \int_{M}  \frac{\omega(z_1)}{\pi}  \int_{d_g(z_1, z_2)<a_n, d_g(z_1, z_3)<a_n} \rho_{3} \left(z_1, z_{2}, z_{3}\right)  \frac{\omega(z_2)}{\pi} \wedge  \frac{\omega(z_3)}{\pi}. $$
		For any  $z_1$,  we can choose a normal coordinate around $z_1$ such that $z_1=0$. Then  using the small geodesic distance estimate \eqref{distanceest}  and a change of variables, we get  
		\begin{equation*}\begin{split}
				\mathbb E|D_1|\leq &C\int_{M}\left( \int_{|z_2|\leq a_n, |z_3|\leq a_n } \rho_{3} \left(0, z_{2}, z_{3}\right)  \frac{\omega(z_2)}{\pi} \wedge  \frac{\omega(z_3)}{\pi} \right)  \frac{\omega(z_1)}{\pi} \\
				=&C \int_{M}\left( \int_{|z_2|\leq n^{1/2}a_n, |z_3|\leq n^{1/2}a_n} \rho_{3} \left(0, \frac{z_{2}}{\sqrt n},\frac{ z_{3}}{\sqrt n}\right) \omega\left({\frac {z_2}{\sqrt{n}}}\right)\wedge \omega\left({\frac {z_3}{\sqrt{n}}}\right) \right) \omega(z_1).
		\end{split} \end{equation*}
		By the  volume estimate \eqref{volumeestimate}  and  the upper bound \eqref{localbound}, we further have
		\beq\label{differered} \begin{aligned}
			\E|D_1|	\leq &Cn^{-2} \int_M \left( \int_{|z_2|\leq n^{1/2}a_n, |z_3|\leq n^{1/2}a_n} \rho_{3} \left(0, \frac{z_{2}}{\sqrt n},\frac{ z_{3}}{\sqrt n}\right)  {d} \ell_{z_{2}} \wedge {d}\ell_{z_{3}}  \right) \omega(z_1)\\
			\leq &Cn^{}  \int_{|z_2|\leq n^{1/2}a_n, |z_3|\leq n^{1/2}a_n} |z_2|^2 |z_3|^2|z_2-z_3|^2  {d} \ell_{z_{2}}\wedge  {d}\ell_{z_{3}} 
			=O(n^{-3/2}).
		\end{aligned}\eeq
		Similarly we can bound $\E|D_2|$ by
		\begin{equation*} \begin{split}	
				&\int_{M}  \frac{\omega(z_1)}{\pi}  \int_{d_g(z_1,z_2)<a_n, d_g(z_1,z_3)<5\log^2 n/\sqrt n, d_g(z_3,z_4)<a_n} \rho_{4} \left(z_1, z_{2}, z_{3}, z_4\right) \\&\quad \frac{\omega(z_2)}{\pi} \wedge  \frac{\omega(z_3)}{\pi} \wedge  \frac{\omega(z_4)}{\pi} \\ 
				\leq  
				&C\int_M \omega(z_1) \int_{|z_2|\leq a_n, |z_3|\leq 5\log^2 n/\sqrt n, |z_4-z_3|<a_n } \rho_{4} \left(0, z_{2}, z_{3}, z_4\right)  {\omega(z_2)} \wedge  {\omega(z_3)}\wedge  {\omega(z_4)}, 
			\end{split}
		\end{equation*} 
		which can be further  bounded by
		\beq \label{differered22} \begin{split}
			&Cn^{-3} \int_M \omega(z_1) \int_{|z_2|\leq n^{1/2}a_n, |z_3|\leq 5\log^2 n, |z_4-z_3|< n^{1/2}a_n}\rho_{4} \left(0, \frac{z_{2}}{\sqrt n},\frac{ z_{3}}{\sqrt n},\frac{ z_{4}}{\sqrt n} \right)\\&\quad {d} \ell_{z_{2}} \wedge {d}\ell_{z_{3}} \wedge {d}\ell_{z_{4}}  \\
			\leq &Cn^{}  \int_{|z_2|\leq n^{1/2}a_n, |z_3|\leq 5\log^2 n, |z_4-z_3|< n^{1/2}a_n} |z_2|^2 |z_4-z_3|^2 {d} \ell_{z_{2}}\wedge  {d}\ell_{z_{3}} \wedge {d}\ell_{z_{4}}  \\
			=&O(\log ^4n/n).  
		\end{split}\eeq	Therefore,  the cardinalities of the sets $\tilde{\mathcal I}_{n}$ and ${\mathcal I}_{n}$   are asymptotically the same in probability, which completes the proof of Lemma \ref{nssn}. 
	\end{proof}

	\subsection{Proof of Theorem \ref{main1}}\label{sec:pfmain1}
	As we explained before, to prove Theorem \ref{main1}, we only need to prove \eqref{tilderhok}. We define the sets  $$\label{omega1} \Omega_k: =\Big\{(z_1,..,z_k)\in U^k, \,\,\min_{i\neq j}d_g(z_i, z_j) \geq 4\log^2 n/\sqrt n+2a_n \Big\},$$
	and 
	$$\label{omega2} \tilde{\Omega}_k: =\Big\{(z_1,..,z_k)\in U^k, \,\,\min_{i\neq j}d_g(z_i, z_j) \geq 4\log^2 n/\sqrt n \Big\}.$$
	We  define the complementary set 
	$$\label{comset}   (\tilde{\Omega}_k)^c :=U^k\setminus  \tilde{\Omega}_k. $$
	
	Observe that, by the definition of $\mathlarger{\chi}_n$,
	it cannot contain two points with their distance smaller than $4\log^2 n/\sqrt n$. Consequently,
	$\tilde{\rho}_k(z_1,\ldots, z_k)=0$  if $d_g(z_i,z_j)<4\log^2 n/\sqrt n$ for some $i\neq j$. In other words, 
	\begin{equation}\label{0rho}
		\tilde{\rho}_k\equiv0 \,\,\mbox{on}\,\,(\tilde{\Omega}_k)^c.
	\end{equation} 
	On $\tilde{\Omega}_k$, by the definition of $\tilde {\mathcal I}_n$ in \eqref{tildein},  we have the  upper bound 
	\begin{equation}\label{trhoup}
		\begin{split}
			&\tilde{\rho}_k(z_1,\ldots, z_k) \\
			\leq & \left(\frac{1}{2}\right)^k \int_{ B_{a_n}(z_1)} \frac{\omega( w_{1})}{\pi} \cdots \int_{ B_{a_n}(z_k)} \frac{\omega( w_{k})}{\pi} \rho_{2k}\left(z_{1}, w_{1}, \ldots, z_{k}, w_{k}\right),
		\end{split}
	\end{equation}
	where  the factor of $(1/2)^k$ comes from the fact that the $z_i's$ here are chosen uniformly from each pair of zeros of  small distances.
	For the lower bound, we define two sets
	$$
	D_3(z_1,\ldots, z_k):=\cup_{i=1}^k B_{2a_n}(z_i)
	$$%
	and 
	\begin{align*}
		D_4(z_1,\ldots, z_k):=\Big\{(x,y):d_g(x,y)<a_n \mbox{ and }\exists\, 1\leq i\leq k, d_g(z_i,x)<5\log^2 n/\sqrt n\Big\}.
	\end{align*}
	We then set 
	\begin{equation*}
		\begin{split}
			E_1:= &\left(\frac{1}{2}\right)^k \int_{ B_{a_n}(z_1)}\frac{\omega( w_{1})}{\pi} \cdots\int_{ B_{a_n}(z_k)}\frac{\omega( w_{k})}{\pi} \int_{D_3(z_1,\ldots, z_k)} \frac{\omega( u_1)}{\pi} \\
			&\quad \rho_{2 k+1}\left(z_{1}, w_{1}, \ldots, z_{k}, w_{k},u_1\right),
		\end{split}
	\end{equation*}
	and
	\begin{equation}\label{defe2}
		\begin{split}
			E_2:=&	 \left(\frac{1}{2}\right)^k \int_{ B_{a_n}(z_1)}\frac{\omega( w_{1})}{\pi} \cdots\int_{ B_{a_n}(z_k)}\frac{\omega( w_{k})}{\pi} \int_{D_4(z_1,\ldots, z_k)} \frac{\omega( u_1)}{\pi}  \wedge \frac{\omega( u_2)}{\pi} \\
			&\quad \rho_{2 k+2}\left(z_{1}, w_{1}, \ldots, z_{k}, w_{k},u_1,u_2\right).
		\end{split}
	\end{equation}
	On $\Omega_k$, we have the lower bound  (see \cite[Lemma 2]{FGY} for an analogous
estimate in the setting of zeros of real Gausisan processes)
	\begin{equation}\label{lwoeredg}
		\begin{split}
			\tilde{\rho}_k(z_1,\ldots, z_k) \geq &
			\left(\frac{1}{2}\right)^k \int_{ B_{a_n}(z_1)}\frac{\omega( w_{1})}{\pi} \cdots\int_{ B_{a_n}(z_k)}\frac{\omega( w_{k})}{\pi} \rho_{2 k}\left(z_{1}, w_{1}, \ldots, z_{k}, w_{k}\right)\\
			&-E_1-E_2.
		\end{split}
	\end{equation}
	For the correlation functions of the auxiliary point process $\larger\chi_n$, we have 
	\begin{lem}\label{bigdistance}As $n\to\infty$, 
		$\tilde{\rho}_k$ is uniformly bounded on $\tilde{\Omega}_k$. Moreover,
		one has the uniform convergence, 
		\beq\label{spera}
		\tilde{\rho}_{k}^{}\left(z_{1}, \ldots, z_{k}\right) \to \left(\frac{a^4}8\right)^{k}\quad \mbox{in }\,\,\, \Omega_k. \eeq
		
		
	\end{lem}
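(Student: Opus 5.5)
The plan is to sandwich $\tilde\rho_k$ between the upper bound \eqref{trhoup} and the lower bound \eqref{lwoeredg}. The main term
$$M_k(z_1,\ldots,z_k):=\Big(\frac12\Big)^k\int_{B_{a_n}(z_1)}\frac{\omega(w_1)}{\pi}\cdots\int_{B_{a_n}(z_k)}\frac{\omega(w_k)}{\pi}\,\rho_{2k}(z_1,w_1,\ldots,z_k,w_k)$$
will be controlled with Theorem \ref{longdr}, Lemma \ref{lunis} and Theorem \ref{shortra}, and the error terms $E_1,E_2$ with Theorem \ref{shortra}.

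\emph{Uniform boundedness on $\tilde\Omega_k$.} Here $d_g(z_i,z_j)\geq 4\log^2 n/\sqrt n$, and each $w_i$ lies within $a_n\ll \log^2 n/\sqrt n$ of $z_i$. Hence the partition $\{\{z_1,w_1\},\ldots,\{z_k,w_k\}\}$ has minimal inter-cluster distance at least $3\log^2n/\sqrt n$. Theorem \ref{longdr} then gives $\rho_{2k}=\prod_i\rho_2(z_i,w_i)+O(n^{-\infty})$. Integrating over the product of balls, whose total $\omega/\pi$-volume is $O(n^{-3k/2})$, we get
$$M_k=\prod_{i=1}^k\Big(\frac12\int_{B_{a_n}(z_i)}\rho_2(z_i,w)\frac{\omega(w)}{\pi}\Big)+O(n^{-\infty}).$$
By Lemma \ref{lunis}, uniformly in $z_i$, each factor tends to $\frac12\cdot\frac{a^4}{4}=\frac{a^4}{8}$. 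So $M_k\to(a^4/8)^k$ uniformly on $\tilde\Omega_k$. Combined with \eqref{trhoup}, this yields uniform boundedness and the upper half of \eqref{spera}.

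\emph{Lower bound on $\Omega_k$.} Since $\Omega_k\subset\tilde\Omega_k$, the same computation handles the main term, and it remains to show $E_1,E_2\to0$ uniformly on $\Omega_k$.

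For $E_1$: the extra point $u_1$ lies in some $B_{2a_n}(z_i)$. Group $\{z_i,w_i,u_1\}$ as one cluster and the other pairs as clusters. The inter-cluster separation is still $\gtrsim\log^2 n/\sqrt n$, so Theorem \ref{longdr} factorizes $\rho_{2k+1}$ into $\rho_3(z_i,w_i,u_1)\prod_{j\neq i}\rho_2(z_j,w_j)$ up to $O(n^{-\infty})$. The $\rho_2$ integrals are $O(1)$ by Lemma \ref{lunis}. For the $\rho_3$ integral, rescale in normal coordinates at $z_i$ and apply \eqref{localbound}, exactly as in \eqref{differered}:
$$n^{3}\cdot n^{-2}\int_{|w|,|u|\lesssim n^{-1/4}}|w|^2|u|^2|w-u|^2=O(n^{-3/2}).$$
Note that there is no outer $\int_M$ here, which saves a factor $n$.

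For $E_2$: either $u_1$ is within $5\log^2n/\sqrt n$ of some $z_i$. Then $\{z_i,w_i,u_1,u_2\}$ form one cluster, which requires care since $u_1$ may be at distance comparable to $\log^2 n/\sqrt n$ from $z_i$. I would refine the partition: when $\{u_1,u_2\}$ is far ($\geq\log^2n/\sqrt n$) from $\{z_i,w_i\}$, split it off; otherwise keep one 4-cluster. In either case, factorize via Theorem \ref{longdr} and bound the 4-point function by Theorem \ref{shortra} in rescaled coordinates with $|z|\leq\log^2 n$. This gives, as in \eqref{differered22}, a bound
$$n^{4}\cdot n^{-3}\int|w|^2|u_2-u_1|^2\,d\ell\,d\ell\,d\ell=O(\log^4 n/n).$$
When $\{u_1,u_2\}$ is separated, the factor $\rho_2(u_1,u_2)$ integrated over a $\log^2n/\sqrt n$-neighborhood of the $z_i$ and $B_{a_n}(u_1)$ gives $O(\log^4 n\cdot n^{-1}\cdot n^{-1/2})$, which also vanishes.

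\emph{Main obstacle.} The delicate step is applying Theorem \ref{longdr} to the clusters in $E_2$. The natural partition can have inter-cluster distance just below the threshold $\log^2 n/\sqrt n$. I expect to handle this by choosing the threshold slightly smaller (the $4\log^2n$ versus $5\log^2n$ gap in the definitions leaves room), or by merging nearby clusters and using the bound \eqref{localbound} on the merged cluster, which only costs polylogarithmic factors against the polynomial gain $n^{-1/2}$ or better.
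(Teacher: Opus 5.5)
Your proposal is essentially the paper's proof. It uses the same sandwich between \eqref{trhoup} and \eqref{lwoeredg}, the same cluster factorizations via Theorem \ref{longdr} with Lemma \ref{lunis} for the main term, and \eqref{localbound} to get $E_1=O(n^{-3/2})$ and $E_2=O(\log^4 n/n)$. The separation issue you flag in $E_2$ is settled in the paper by first localizing $(u_1,u_2)$ near a single $z_i$ (the sets $D_{4,i}$, which use the extra $2a_n$ margin in $\Omega_k$ to keep $u_1,u_2$ at distance $\geq 2\log^2 n/\sqrt n$ from every $z_j$, $j\neq i$) and only then splitting into your two cases. Two harmless slips: your separated case is $O(\log^4 n/n)$, without an extra factor $n^{-1/2}$, and the outer $\int_M$ in \eqref{differered} has mass $O(1)$, so omitting it saves nothing.
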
 
	Assume that Lemma \ref{bigdistance} is correct. 
	Then by   Lemma \ref{bigdistance}, identity \eqref{0rho}, and the fact that  the set $\tilde{\Omega}_k \setminus \Omega_k$ has a vanishing measure as $n\to\infty$,  we have:
	\begin{align*}&\lim_{n\to\infty}\int_{U^k}\tilde\rho_k\left({z_1},..,{ z_k}\right)\frac{\omega(z_1)}\pi\wedge \cdots \wedge \frac{\omega(z_k)}\pi\\ =&\lim_{n\to\infty}\int_{ \Omega_k  } \tilde\rho_k\left({z_1},..,{ z_k}\right)\frac{\omega(z_1)}\pi\wedge \cdots \wedge \frac{\omega(z_k)}\pi, 
	\end{align*}
	which is equal to (by the dominated convergence theorem)
	\begin{align*}
		& \int_{U^k}\lim_{n\to\infty}[ 1_{\Omega_k} \tilde{\rho}_{k}^{}\left(z_{1}, \ldots, z_{k}\right)] \frac{\omega(z_1)}\pi\wedge \cdots \wedge \frac{\omega(z_k)}\pi
		= \left(\frac{a^4}8 \int_U\frac{\omega}\pi\right)^{k}.\end{align*}
	This gives \eqref{tilderhok}, and thus we complete the proof of Theorem \ref{main1}. 
	All the remaining effort is to prove  Lemma \ref{bigdistance}. 
	\begin{proof}[Proof of Lemma \ref{bigdistance}] By \eqref{trhoup} and \eqref{lwoeredg}, the lemma will be established if we can prove that the integration involving $\rho_{2k}$ yields the   limit \eqref{spera}, and both $E_1$ and $E_2$ converge to 0. We examine these three terms individually.
		

\emph{Step 1.}		We first consider the main term	$$
		\left(\frac12\right)^k\int_{ B_{a_n}(z_1)}\frac{\omega( w_{1})}\pi \cdots\int_{ B_{a_n}(z_k)}\frac{\omega( w_{k})}\pi    \rho_{2 k}\left(z_{1}, w_{1}, \ldots, z_{k}, w_{k}\right).  $$
		
		Given $(z_1, \dots, z_k) \in \tilde{\Omega}_k$, we have $$\eta := \min_{i\neq j} d_g(z_{i},  z_j)\geq 4\log^2 n/\sqrt n.$$
		For the points $w_i's$ where $w_i \in B_{a_n} (z_i)$, we can observe $k$ clusters of pairs of points, forming the partition $\mathcal{P} = \{\{z_1, w_1\}, \dots, \{z_k, w_k\}\}$, and the distance between any two  clusters is greater than $\eta/2\geq 2\log^2 n/\sqrt{n}$. Thus, using \eqref{mainsplit2}, we obtain
		\beq\label{split1122}\rho_{2k}(z_1, w_1, .., z_k, w_k)=\rho_2(z_1, w_1)\cdots\rho_2(z_k, w_k)+O(n^{-\infty}).\eeq
		Therefore, by the limit \eqref{maintwolimit} in Lemma \ref{lunis}, 
		we further obtain, \begin{align*}&\lim_{n\to\infty}\int_{ \prod_{i=1}^k B_{a_n}(z_i) }\left(\frac12\right)^k\rho_{2k}(z_1, w_1,.., z_k, w_k) \frac{\omega( w_{1})}\pi\wedge \cdots \wedge\frac{\omega( w_{k})}\pi\\=&\lim_{n\to\infty}\prod_{i=1}^k\int_{   B_{a_n}(z_i) }\frac 12 \rho_2(z_i, w_i)\frac{\omega( w_{i})}\pi =\Big(\frac { a^4}8\Big)^k.\end{align*}
		This gives the upper bound for $\tilde{\rho}_k$ on $\tilde{\Omega}_k$, which is  also valid  on $\Omega_k$ since $\Omega_k \subset \tilde{\Omega}_k$.
		The rest is to prove a matching  lower bound for $\tilde{\rho}_k$ on $\Omega_k$ by \eqref{lwoeredg}.	

        \emph{Step 2.}
By discarding the constant $(1/2)^k$, we consider the error term $E_1$, 
		\begin{align*}\int_{ B_{a_n}(z_1)}\frac{\omega( w_{1})}{\pi} \cdots\int_{ B_{a_n}(z_k)}\frac{\omega( w_{k})}{\pi} \int_{D_3(z_1,\ldots, z_k)} \frac{\omega( u_1)}{\pi} \rho_{2 k+1}\big(z_{1}, w_{1}, \ldots, z_{k}, w_{k},u_1\big).
		\end{align*}	 For $n$ large enough, we first have $B_{2a_n}(z_i)  \cap  B_{2a_n}(z_j)  =\emptyset$ for distinct $z_i\neq z_j$ since $d_g(z_i, z_j)\geq 4\log^2 n/\sqrt n$. And thus $E_1$ can be split into the summation 
		$$
		\sum _{\ell=1}^k\int_{ B_{a_n}(z_1)}\frac{\omega( w_{1})}{\pi} \cdots\int_{ B_{a_n}(z_k)}\frac{\omega( w_{k})}{\pi} \int_{B_{2a_n}(z_\ell) } \frac{\omega( u_1)}{\pi} \rho_{2 k+1}\left(z_{1}, w_{1}, \ldots, z_{k}, w_{k},u_1\right).
		$$
		Assume $u_1\in B_{2a_n}(z_1)$,  i.e., both $w_1$ and $u_1$ fall in the geodesic ball $B_{2a_n}(z_1)$. In this case,  $z_1, w_1$ and $u_1$ are considered to be in the same cluster, and each pair of $z_i$ and $w_i$ are in the same cluster for $i=2,.., k$, i.e., we have the partition $\mathcal{P} = \{\{z_1, w_1, u_1\}, \dots, \{z_k, w_k\}\}$, and the distance between any two  clusters is greater than $\eta/2\geq 2\log^2 n/\sqrt{n}$.  By \eqref{mainsplit2}, as $n$ large enough, we have  the estimate,  
		$$\rho_{2k+1}(z_1, .., z_k, w_1,.., w_k, u_1)=\rho_3(z_1, w_1, u_1)\rho_2(z_2, w_2)\cdots \rho_2(z_k, w_k)+O(n^{-\infty}).$$
		By  the limit \eqref{maintwolimit},  we get 
		$$
		\begin{aligned}
			&\int_{ B_{a_n}(z_1)}\frac{\omega( w_{1})}{\pi} \cdots\int_{ B_{a_n}(z_k)}\frac{\omega( w_{k})}{\pi} \int_{B_{2a_n}(z_1) } \frac{\omega( u_1)}{\pi} \rho_{2 k+1}\left(z_{1}, w_{1}, \ldots, z_{k}, w_{k},u_1\right) \\ \leq  &C\Big[ \int_{B_{2a_n}(z_1)^2 }\rho_3(z_1, w_1, u_1)\omega(w_1)\wedge\omega(u_1) \Big]\Big[\prod_{i=2}^k   \int_{B_{a_n}(z_i) }\rho_2(z_i, w_i)\omega(w_i)\Big]\\
			\leq & C \int_{B_{2a_n}(z_1)^2 }\rho_3(z_1, w_1, u_1)\omega(w_1)\wedge \omega(u_1) =O(n^{-3/2}),
		\end{aligned}
		$$ where the last estimate follows the same lines of reasoning as in \eqref{differered}.  There are only $k$ terms in the summation, and thus $E_1$ decays with an order of $O(n^{-3/2})$, which is negligible in the limit. 

        \emph{Step 3.}		Finally we consider the error term $E_2$ in \eqref{defe2}.
		Since  $d_g(z_i, z_j)\geq 4\log^2 n/\sqrt n+2a_n$ for all $i\neq j$, we have 
		$$
		\cup_{i=1}^k \left(\cap_{j\neq i}[B_{ 2\log^2 n/\sqrt n+a_n}(z_{j})]^c\right)=M.
		$$
		For any $x\in M$, there must exist an $i$ such that $x\in \cap_{j\neq i}[B_{2\log^2 n/\sqrt n+a_n }(z_{j})]^c$. 
		Consequently, for $n$ large, we have $D_4(z_1,\ldots, z_k)=\cup_{i=1}^k D_{4,i}$ where
		$$
		D_{4,i}=D_4(z_1,\ldots, z_k) \cap \{(x,y)|x,y\in  \cap_{j\neq i}[B_{2\log^2 n/\sqrt n}(z_{j})]^c \}. 
		$$
		Here, we used the fact that, if $x \in \cap_{j\neq i}[B_{2\log^2 n/\sqrt n+a_n}(z_{j})]^c$, then for  $n$ large enough, $y \in \cap_{j\neq i}[B_{\log^2 n/\sqrt n}(z_{j})]^c$ provided that $d_g(x,y)<a_n$. 
		
		Then $E_2$ can be bounded above by   
		\begin{align*} &\sum_{\ell=1}^k\int_{ B_{a_n}(z_1)}\frac{\omega( w_{1})}{\pi} \cdots\int_{ B_{a_n}(z_k)}\frac{\omega( w_{k})}{\pi} \int_{D_{4,\ell} } \frac{\omega( u_1)}{\pi}  \wedge \frac{\omega( u_2)}{\pi}\\& \quad \rho_{2 k+2}\left(z_{1}, w_{1}, \ldots, z_{k}, w_{k},u_1,u_2\right).
		\end{align*}
		Without loss of generality,	assume $(u_1, u_2)\in D_{4,1}$. In this case, the two pairs $(z_1, w_1)$, $(u_1,u_2)$  are considered to be in the same cluster, and  $(z_i, w_i)$ are in the same cluster for $i=2,.., k$, i.e., we have the partition $\mathcal{P}$ $=$ $\{\{z_1, w_1, u_1, u_2\}$, $\{z_2, w_2\}$, $\dots$, $\{z_k, w_k\}\}$, and the distance between any two  clusters is greater than $\log^2 n/\sqrt{n}$.  
		By \eqref{mainsplit2} again, as $n$ large enough,  we have the asymptotic expansion,  
		\begin{align*}\rho_{2k+2}(z_1,w_1 .., z_k, w_k, u_1, u_2)=\rho_4(z_1, w_1, u_1, u_2)\rho_2(z_2, w_2)\cdots \rho_2(z_k, w_k)+O(n^{-\infty}).\end{align*} 
		As we argued before, the integration of  $\prod_{i=2}^k\rho_2(z_i, w_i)$ with respect to variables $w_2, .., w_k$ is bounded above by some constant. Therefore, the integration of $\rho_{2k+2}$  has the upper bound, 
		\begin{align}\label{2k+2bd1}
			C\int_{d_g(z_1, w_1)<a_n, (u_1,u_2)\in D_{4,1}}  \quad\rho_4(z_1, w_1, u_1, u_2) \omega(w_1)\wedge\omega(u_1)\wedge\omega(u_2).
		\end{align}
		We now define
		$$
		D_{4,1,1}=D_{4,1}\cap \{(x,y)|d_g(x,z_1)<2\log^2 n/\sqrt n\}.
		$$
		And, for $2\leq j\leq k$, let
		$$
		D_{4,1,j}=D_{4,1}\cap \{(x,y)|d_g(x,z_1)\geq 2\log^2 n/\sqrt n, d_g(x,z_j)<5\log^2 n/\sqrt n\}.
		$$
		Then by the definition of $D_4$, we have $D_{4,1}=\cup_{j=1}^k D_{4,1,j}$ and the integration in \eqref{2k+2bd1} can be bounded above by
		$$
		C\sum_{j=1}^k 	\int_{d_g(z_1, w_1)<a_n, (u_1,u_2)\in D_{4,1,j}}  \quad\rho_4(z_1, w_1, u_1, u_2) \omega(w_1)\wedge\omega(u_1)\wedge\omega(u_2). 
		$$  
		By the definition of the correlation function, the term for $j=1$  can be bounded above by the expected number of tuples of 
		$(w_1, u_1, u_2)$ satisfying the conditions $d_g(z_1,w_1)<a_n$, $d_g(u_1,u_2)<a_n$, and $d_g(u_1,z_1)<2\log^4 n/\sqrt n$, which is of the order $O(\log^4 n/n)$, as derived in \eqref{differered22}.
		
		By the definition of $D_{4,1,j}$ for $j\geq 2$, 
		$d_g(\{z_1,w_1\},\{u_1,u_2\})\geq \log^2 n/\sqrt{n}$. And thus
		we have the factorization 
		$$
		\rho_4(z_1,w_1,u_1,u_2)=\rho_2(z_1,w_1)\rho_2(u_1,u_2)+
		O(n^{-\infty}).
		$$ 
		The integration of $\rho_2(z_1,w_1)$ with respect to $w_1$ gives rise to a bounded factor as before. To estimate the integration of  $\rho_2(u_1,u_2)$, we choose a normal coordinate around $z_j$ to get
		\begin{align*}
			&\int_{(u_1,u_2)\in D_{4,1,j}} \rho_2(u_1,u_2)\omega(u_1)\wedge \omega(u_2)\\
			\leq &C
			\int_{  |u_1-u_2|<n^{1/2}a_n,  |u_1|<5\log^2 n  }
			\quad\rho_2(\frac{u_1}{\sqrt n}, \frac{u_2}{\sqrt n}) 
			\omega(\frac{u_1}{\sqrt n})
			\wedge\omega(\frac{u_2}{\sqrt n})
			\\ 
			\leq &C\int_{  |u_1- u_2|<n^{1/2}a_n,  |u_1|<5\log^2 n}   |u_1-u_2|^2  d\ell_{u_1}\wedge d\ell_{u_2}=O(\log^4 n/n).
		\end{align*} 
		Since $D_4=\cup_{1\leq i,j\leq k}D_{4,i,j}$ which consists of $k^2$ sets, and the integration on each $D_{4,i,j}$ decays with order $O(\log^4 n/n)$, we can conclude that the third term on the right hand side of \eqref{lwoeredg} also tends to 0 in the limit, and thus we complete the proof of Lemma \ref{bigdistance}.
	\end{proof}

\section*{Acknowledgements}
We thank the anonymous referees for carefully reading the manuscript and providing valuable comments, which have helped to improve the clarity of the paper. Dong Yao is supported by the National Key R\&D Program of China (No.\ 2023YFA1010101) and the NSFC grant (No.  12571161).

\end{document}